\newtheorem{proposition}{Proposition}[section]
\newtheorem{theorem}[proposition]{Theorem}
\newtheorem{lemma}[proposition]{Lemma}
\theoremstyle{definition}
\theoremstyle{remark}
\newtheorem{remark}[proposition]{Remark}
\numberwithin{equation}{section}
\newcommand{\N}{{\mathbb{N}}}
\newcommand{\R}{{\mathbb{R}}}
\newcommand{\loc}{{\text{loc}}}
\newcommand{\Dcal}{{\mathcal{D}}}
\newcommand{\adp}{{\alpha_{0}}}
\DeclareMathOperator{\dist}{dist}
\DeclareMathOperator{\divv}{div}
\DeclareMathOperator{\tr}{tr}
\DeclareMathOperator{\diam}{diam}
\newcommand{\ssubset}{\subset\joinrel\subset}
\title{Local H\"older and maximal regularity of solutions of elliptic equations with superquadratic gradient terms}
\author{Marco Cirant and Gianmaria Verzini}
\begin{document}
\maketitle

\begin{abstract} We study the local H\"older regularity of strong solutions $u$ of second-order uniformly elliptic equations having a gradient term with superquadratic growth $\gamma > 2$, and right-hand side in a Lebesgue space $L^q$. When $q > N\frac{\gamma-1}{\gamma}$ ($N$ is the dimension of the Euclidean space), we obtain the optimal H\"older continuity exponent $\alpha_q > \frac{\gamma-2}{\gamma-1}$. This allows us to prove some new results of maximal regularity type, which consist in estimating the Hessian matrix of $u$ in $L^q$. Our methods are based on blow-up techniques and a Liouville theorem.
\end{abstract}
\noindent
{\footnotesize \textbf{AMS-Subject Classification}}. 
{\footnotesize 35B65, 35J60, 35R05}\\
{\footnotesize \textbf{Keywords}}. 
{\footnotesize Viscous Hamilton-Jacobi, superquadratic Hamiltonian, blow-up procedure, Riccati equation.}

\section{Introduction}

The goal of this paper is to address some regularity issues related to the elliptic PDE
\begin{equation}\label{eq:HJ_main}
-\tr \left(A(x) D^2 u\right) + H(x,Du) = f(x)\qquad\text{in }\Omega \subset \R^N,
\end{equation}
where $A(x)$ is a nondegenerate diffusion matrix, $H$ has superquadratic growth in the gradient variable, and $f$ belongs to some Lebesgue space $L^q(\Omega)$. We are interested in the H\"older regularity of the solution $u$, as well as the regularity of the gradient $D u$ and the Hessian matrix $D^2 u$ in $L^q$.

Equations of the form \eqref{eq:HJ_main} appear naturally in the theory of (ergodic) stochastic control, homogenization, in the theory of growth of surfaces, and in differential games with many players. They appear in the literature under different names, such as (viscous) Hamilton-Jacobi equations, KPZ or Riccati equations. As a prototype of semilinear equation with superlinear character in the first order entry, the regularity properties of \eqref{eq:HJ_main} have been extensively investigated. Nevertheless, some recent questions concerning the ``maximal regularity'' of solutions demand for a deeper understanding of the interaction between the linear second order diffusion and the nonlinear first order term.

Let us start with some considerations on the H\"older regularity of solutions to \eqref{eq:HJ_main}, having in mind the model superquadratic Hamiltonian
\begin{equation}\label{modelH}
H(x, Du) = |Du|^\gamma, \qquad \gamma > 2.
\end{equation}
A few years ago, A. Dall'Aglio and A. Porretta showed in \cite{Dall_Aglio_2015} that \textit{weak} solutions are $\frac{\gamma-2}{\gamma-1}$-H\"older continuous, provided that $f \in L^q$ and
\begin{equation}\label{q0def}
q = q_0 := \frac{N}{\gamma'}=N \frac{\gamma-1}{\gamma}.
\end{equation}
Their result is for equations in divergence form, and it is genuinely perturbative: the diffusion term plays no role (it can be even degenerate), and the regularity is a sole byproduct of the coercivity of $H$. In some sense, it cannot be even improved, since 
\begin{equation}\label{badu}
\text{$u(x) = c|x|^{\frac{\gamma-2}{\gamma-1}}$ \ is a weak solution of \ $-\Delta u + |Du|^\gamma = 0$ on $\R^N$}
\end{equation}
for suitable $c \in \mathbb R$. Note that $\frac{\gamma-2}{\gamma-1}$-H\"older estimates are true even for \textit{subsolutions}, and hold up to the boundary: this shows that superquadratic problems enjoy some properties that are rather unnatural for elliptic equations. Besides, solutions to uniformly elliptic (quasilinear) equations having subquadratic (subnatural) growth in the gradient are known to be H\"older continuous when $q > \frac N 2$ (see e.g. \cite{Bensoussan_2002, LU}), and this classical fact leans on the perturbative nature of $H(Du)$ when $\gamma < 2$.

If one looks at solutions that are more than just weak, better a priori estimates can be obtained. In \cite{Lions_1985}, P.-L. Lions showed that Lipschitz estimates for classical solutions can be achieved, provided that $q > N$, in the full superlinear range $\gamma > 1$. It is worth remarking that these estimates were obtained via the Bernstein method, which allowed to exploit both the regularizing effects of the diffusion and the coercivity of $H$ (and therefore, by means of a nonperturbative argument). Lipschitz estimates have been obtained later in \cite{Capuzzo_2010} for viscosity solutions, and for equations with a possibly degenerate diffusion matrix, assuming in addition $f$ to be Lipschitz continuous.

Our first goal is to fill the gap in the understanding of $\alpha$-H\"older regularity of $u$, in the range
\[
\frac{\gamma-2}{\gamma-1} < \alpha < 1 \qquad\text{with } f \in L^q, \quad \frac N {\gamma'} < q < N, 
\]
for solutions in the \textit{strong} sense, which is naturally intermediate between weak and classical. To achieve this goal, we will develop a nonperturbative method, which will again exploit first and second order regularizing effects. These new H\"older regularity results will imply in a straightforward way new results regarding the so-called $L^q$-maximal regularity.

\smallskip

The problem of $L^q$-maximal regularity for \eqref{eq:HJ_main} has been raised by P.-L. Lions a decade ago in a series of seminars, and, roughly speaking, is a semilinear version of the classical Calder\'on-Zygmund linear maximal regularity; that is, under the assumption that $f$ is bounded in $L^q$, then one should be able to have a control of $H(Du)$ and $ \tr \left(A D^2 u\right)$ in $L^q$. Lions conjectured that this should be possible provided that $q > q_0$ ($q_0$ as in \eqref{q0def}), and the conjecture has been shown to be true in the recent work \cite{Cirant_2021}. The results in \cite{Cirant_2021} still have some limitations: they hold for classical solutions, $q > 2$ is required (while it may happen that $q_0 < 2$ for some $N, \gamma$) and they are \textit{not} local but require $f(x)$ to be periodic in $x$. Moreover, no $x$ dependence is allowed in first and second order terms. 

Our second goal here is to circumvent all these limitations. The results in \cite{Cirant_2021} are again based on a Bernstein method. Apart from the issues on the generality of the statements in \cite{Cirant_2021}, a crux is that the Bernstein approach may not be employed for different problems, e.g. involving degenerate or fractional diffusions. Therefore, a general goal of this work is to develop a different approach to maximal regularity, which can be flexible enough to be applied to a wider range of equations (which will be briefly described at the end of this introduction). The core idea of the method proposed here is to get maximal regularity using H\"older estimates as an intermediate step. Nevertheless, (linear) maximal regularity will be crucial to obtain such H\"older estimates, so one may argue, after reading the proofs, that regularity at these two scales is in fact interconnected.

\smallskip

We now discuss our standing assumptions. $\Omega$ is a bounded Lipschitz domain, with uniform interior sphere property. Concerning $A$, we assume that constants $0<\lambda<\Lambda$ exist such that
\begin{equation}
\tag{Ass$_A$}\label{eq:assA} A \in C(\overline{\Omega}), \qquad \divv A \in L^N(\Omega) ,\qquad \lambda|\xi|^2 \le  A\xi\cdot\xi\le \Lambda|\xi|^2 \text{ on } \overline{\Omega}.
\end{equation}
Moreover we assume, that for some $\gamma>2$, 
\begin{equation}
\tag{Ass$_H$}\label{eq:assH} 
\begin{split}
&H(x,p) = h(x) |p|^\gamma + H_{0}(x,p),\qquad \text{where $h\ge h_{\min}>0$ is continuous in $\overline{\Omega}$}\\
&\text{and }|H_0(x,p)|\le C_1|p|^{\gamma_1}+C_2,\qquad
\text{with }0\le\gamma_1<\gamma,\ C_1,C_2\ge0.
\end{split}
\end{equation}
For brevity, let
\[
\adp = \frac{\gamma-2}{\gamma-1}, \qquad\text{corresponding to}\qquad q_0=\frac{N}{\gamma'}
\]
As we mentioned, the $\alpha_0$-H\"older regularity of $u$, which requires $f \in L^q(\Omega)$, $q = q_0$ will be our starting point. In the following remark, which will be useful in the sequel, we comment how the regularity of $u$ at different scales is given by the embeddings.
\begin{remark}\label{rem:rightalfa} Assume that
\[
\frac{N}{\gamma'} \le q < N.
\] 
If $u\in W^{2,q}(\Omega)$ then, by Sobolev embedding, we have that
$Du\in L^{q^*}(\Omega)$ and $u \in C^{0,\alpha}(\overline{\Omega})$, where
\[
q^* = \frac{Nq}{N-q} \ge \gamma q \qquad\text{and}\qquad
\alpha = 2 - \frac{N}{q} \ge \adp,
\]
and equalities hold if and only if $q = q_0 = \frac{N}{\gamma'}$.
\end{remark}

\smallskip

The main results on the H\"older regularity reads as follows.
\begin{theorem}\label{thm:main_intro}
Let $q>\frac{N}{\gamma'}$. Assume that $\alpha = 2 - \frac{N}{q}$ if $q<N$, or 
$\alpha<1$ if $q\ge N$. For every $M\ge0$ there exists $C=C(M, N, q, \alpha, H, A, \Omega)$ such that if 
$u\in W^{2,q}(\Omega)$ solves 
\eqref{eq:HJ_main} in $\Omega$ in the strong sense, with $\|f\|_q\le M$, then
\begin{equation}\label{mainHolderest}
\sup_{\bar x \neq x} \min\{\dist(\bar x,\partial\Omega),\dist(x,\partial\Omega)\}^{\alpha-\alpha_0}
\,\frac{|u(\bar x)-u(x)|}{|\bar x-x|^\alpha} \le C .
\end{equation}
\end{theorem}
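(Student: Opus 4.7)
I would argue by contradiction via a blow-up procedure combined with a Liouville-type theorem. Assume \eqref{mainHolderest} fails: there exist strong solutions $u_n \in W^{2,q}(\Omega)$ of \eqref{eq:HJ_main} with $\|f_n\|_q\le M$, and pairs of points $\bar x_n, x_n \in \Omega$, such that
\[
L_n := d_n^{\alpha-\alpha_0}\,\frac{|u_n(\bar x_n) - u_n(x_n)|}{r_n^\alpha}\,\longrightarrow\,\infty,\qquad r_n := |\bar x_n - x_n|,\ d_n := \min\{\dist(\bar x_n,\partial\Omega),\dist(x_n,\partial\Omega)\},
\]
and---upon choosing the pairs to near-realize the weighted sup in \eqref{mainHolderest}---the same bound holds at $L_n$ for every other pair $(\bar x, x)$. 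The $\alpha_0$-H\"older baseline $|u_n(\bar x_n) - u_n(x_n)|\le C r_n^{\alpha_0}$, available because $W^{2,q}(\Omega)\hookrightarrow C^{0,\alpha_0}(\overline\Omega)$ for $q\ge q_0$ (Remark~\ref{rem:rightalfa}), then gives $L_n \le C(d_n/r_n)^{\alpha-\alpha_0}$; since $\alpha>\alpha_0$ and $L_n\to\infty$, this forces $r_n/d_n \to 0$, so the blow-up takes place at a purely interior scale.

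\textbf{Rescaling and limit equation.} Set
\[
V_n(y) := d_n^{\alpha-\alpha_0}\,\frac{u_n(x_n + r_n y) - u_n(x_n)}{L_n\,r_n^\alpha},\qquad z_n := \frac{\bar x_n - x_n}{r_n},
\]
so $V_n(0)=0$, $|V_n(z_n)|=1$, $|z_n|=1$. Applied to pairs $(x_n+r_n y,x_n+r_n y')$ with $|y|,|y'|\le R$ (so $r_nR\le d_n/2$ for large $n$, and both points sit at distance $\ge d_n/2$ from $\partial\Omega$), the sup-bound defining $L_n$ yields $|V_n(y)-V_n(y')|\le 2^{\alpha-\alpha_0}|y-y'|^\alpha$, so Arzel\`a--Ascoli gives $V_n\to W$ locally uniformly on $\R^N$ along a subsequence, with $W$ globally $\alpha$-H\"older, $W(0)=0$, $|W(z_\infty)|=1$. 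A direct computation shows that $V_n$ satisfies
\[
-\tr(A_n(y) D^2V_n) + h_n(y)\bigl(L_n(r_n/d_n)^{\alpha-\alpha_0}\bigr)^{\gamma-1}|DV_n|^\gamma + R_n(y,DV_n) = \tilde g_n(y),
\]
where $A_n(y) := A(x_n+r_ny)\to A_\infty$ and $h_n\to h_\infty$ (both constants in $y$); the coefficient of $|DV_n|^\gamma$ is bounded by the baseline and hence converges (up to subsequence) to some $h_\infty\kappa^{\gamma-1}$; the $H_0$-remainder $R_n$ carries a positive power of $r_n$ since $\gamma_1<\gamma$ and therefore vanishes; and---the algebraic crux---the choice $\alpha = 2-N/q$ gives exactly $\|\tilde g_n\|_{L^q(B_R)} \le d_n^{\alpha-\alpha_0}M/L_n \to 0$. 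Stability of strong/viscosity solutions of uniformly elliptic second-order equations then identifies $W$ as an entire solution of
\[
-\tr(A_\infty D^2 W) + h_\infty\kappa^{\gamma-1}|DW|^\gamma = 0 \qquad\text{in }\R^N,
\]
globally of class $C^{0,\alpha}(\R^N)$ with $\alpha > \alpha_0$.

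\textbf{Liouville and main obstacle.} A Liouville-type theorem for this superquadratic equation---asserting that any entire $C^{0,\alpha}$ solution with $\alpha > \alpha_0$ must be constant---then forces $W\equiv$ const, contradicting $W(0)=0\neq W(z_\infty)$. The case $q\ge N$ is handled by writing the target $\alpha<1$ as $2-N/q'$ for some $q'\in(q_0,N)$ and running the same argument. The Liouville step is the genuine difficulty: the model $-\Delta W + |DW|^\gamma = 0$ on $\R^N$ already admits the nontrivial weak solution $W(x)=c|x|^{\alpha_0}$ of precisely critical H\"older regularity, so the gap $\alpha>\alpha_0$ must be exploited in an essentially nonperturbative way (unlike the Dall'Aglio--Porretta baseline, which is perturbative and sharp at $\alpha=\alpha_0$). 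The remaining technical steps---stability under the blow-up and control of the subleading term $H_0$---are routine given the quantitative structure \eqref{eq:assH}.
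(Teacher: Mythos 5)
Your blow-up setup is essentially the paper's, but the proposal has two genuine gaps, and both sit exactly where the real work of the proof is. First, the uniform baseline $|u_n(\bar x_n)-u_n(x_n)|\le C r_n^{\alpha_0}$ with $C$ independent of $n$ cannot come from the embedding $W^{2,q}(\Omega)\hookrightarrow C^{0,\alpha_0}(\overline\Omega)$: the embedding constant multiplies $\|u_n\|_{W^{2,q}}$, which is precisely what is \emph{not} uniformly controlled (that control is the content of Theorem \ref{thm:main_intro2}, a consequence of this theorem, not a hypothesis). The uniform $\alpha_0$-bound is the Dall'Aglio--Porretta estimate (Lemma \ref{lem:DAP}), a nontrivial PDE estimate depending only on $\|f_n\|_q\le M$ and the structure (and requiring $\divv A\in L^N$ to pass to divergence form); the Sobolev embedding is used in the paper only to see that each $L_n$ is finite. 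Second, and more seriously, ``stability of strong/viscosity solutions'' does not let you pass to the limit in the superquadratic term $|DV_n|^\gamma$: local uniform convergence of $V_n$ gives no compactness of $DV_n$, and the right-hand side converges only in $L^q$. The paper's route is to first upgrade the equi-H\"older bound on the blow-up sequence to a uniform $W^{2,q}_{\loc}$ bound via a Caccioppoli-type iteration combining Calder\'on--Zygmund estimates with the Gagliardo--Nirenberg inequality, crucially using $a\gamma<1$ (Proposition \ref{prop:holdertoW2q} and Lemma \ref{lem:iter}); since $\gamma q<q^*$, this yields strong $L^q_{\loc}$ convergence of $|Dw_n|^\gamma$ and identifies the limit equation. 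This step is the technical heart of the argument and cannot be dismissed as routine.

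The third issue is the Liouville step, which you correctly identify as the crux but leave unproved in the form you need it: a Liouville theorem for entire $C^{0,\alpha}$ solutions with $\alpha>\alpha_0$ (in whatever weak sense your limit solves the equation) is not available off the shelf, precisely because $c|x|^{\alpha_0}$ is a weak solution. The paper sidesteps this: once the limit $w_\infty$ is known to lie in $W^{2,q}_{\loc}(\R^N)$ with $q>N/\gamma'$, a Bernstein-type Liouville theorem (Lemma \ref{lem:Lions}, after Lions) shows that \emph{every} such entire solution is constant --- no H\"older gap is exploited there; the singular example is excluded because $D^2(|x|^{\alpha_0})\notin L^q_{\loc}$ near the origin exactly when $q>q_0$. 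One must also handle the possible degeneration $h_\infty=0$ of the limiting coefficient (the paper does: the limit is then harmonic and globally $\alpha$-H\"older with $\alpha<1$, hence constant), which your proposal does not address. In short, the architecture is right, but the $W^{2,q}_{\loc}$ compactness of the blow-up sequence and the correctly formulated Liouville theorem are the substance of the proof, and both are missing.
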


Note that the $\alpha$-H\"older seminorm locally deteriorates as $x, \bar x$ approach $\partial 
\Omega$. This has to be expected, as the function $u$ in \eqref{badu} is a classical solution in 
any domain $\Omega$ such that $0 \in \partial \Omega$, and it is not better than $\alpha_0$-H\"older on $\Omega$. In fact, the weight in \eqref{mainHolderest} is sharp. Note also that, as $
\alpha \to 1$, such weight agrees with the one appearing in Lipschitz estimates which were obtained in 
\cite{Capuzzo_2010} for viscosity solutions. Finally, the constant $C$ depends actually on $
\lambda, \Lambda$ in \eqref{eq:assA}, $\|\divv A\|_{L^N(\Omega)}$, $C_1, C_2, \gamma_1, \gamma, 
h_{\min}$ in \eqref{eq:assH}, the moduli of continuity of $A$ and $h_0$ on $\overline{\Omega}$ and $\Omega$ itself.

The proof of Theorem \ref{thm:main_intro} relies on a blow-up argument. We employ a Liouville theorem for the homogeneous version of \eqref{eq:HJ_main} on $\R^N$. The compactness which is necessary to pass to the limit in the scaling procedure involves an interpolation argument.

\smallskip

As a consequence of Theorem \ref{thm:main_intro}, we get the following $L^q$-maximal regularity result.
\begin{theorem}\label{thm:main_intro2}
Let $q>\frac{N}{\gamma'}$. For every $M\ge0$ and $\Omega' \ssubset \Omega$ there exists $C=C(M$, $\dist(\Omega', \partial \Omega)$, $N, q$, $\alpha, H, A, \Omega)$ such that if $u\in W^{2,q}(\Omega)$ solves
\eqref{eq:HJ_main} in $\Omega$ in the strong sense, with $\|f\|_q\le M$, then
\[
\|u\|_{W^{2,q}(\Omega')} \le C.
\]
\end{theorem}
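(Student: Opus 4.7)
The strategy is to use Theorem~\ref{thm:main_intro} as an intermediate H\"older bound and then close a maximal-regularity loop via Gagliardo--Nirenberg interpolation and interior linear Calder\'on--Zygmund estimates. The arithmetic crux is that the H\"older exponent supplied by Theorem~\ref{thm:main_intro} lies strictly above the critical value $\adp=(\gamma-2)/(\gamma-1)$: this gap is exactly what is needed to bound $|Du|^{\gamma}$ in $L^{q}$ by a \emph{sublinear} power of $\|D^{2}u\|_{L^{q}}$, so that the nonlinear term can be absorbed.

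First, I would fix a chain $\Omega'\ssubset\Omega'''\ssubset\Omega''\ssubset\Omega$ and choose an auxiliary H\"older exponent $\alpha'\in(\adp,\min\{2-N/q,1\})$; such $\alpha'$ exists precisely because $q>q_{0}$. Theorem~\ref{thm:main_intro} with its sharp exponent $\alpha$, together with the fact that $\dist(\cdot,\partial\Omega)$ is bounded from below on $\overline{\Omega''}$, yields $[u]_{C^{0,\alpha'}(\overline{\Omega''})}\le K_{1}$ (using the trivial embedding $C^{0,\alpha}\hookrightarrow C^{0,\alpha'}$ on the bounded set $\Omega''$), with $K_{1}$ depending on $M$, $\dist(\Omega'',\partial\Omega)$, and the data; translation invariance of the equation allows us to subtract $u(x_{0})$ at a fixed $x_{0}\in\Omega''$ in order to also control $\|u\|_{L^{\infty}(\Omega'')}$. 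Next, a Gagliardo--Nirenberg interpolation on the bounded set $\Omega''$ supplies $a\in(0,1)$ and $C>0$ such that
\begin{equation*}
\|Du\|_{L^{\gamma q}(\Omega'')}\le C\,[u]_{C^{0,\alpha'}(\overline{\Omega''})}^{1-a}\|D^{2}u\|_{L^{q}(\Omega'')}^{a}+C\,\|u\|_{L^{\infty}(\Omega'')};
\end{equation*}
the relevant scaling computation (equating the homogeneity of $Du$ in $L^{\gamma q}$ with the weighted interpolation of $C^{0,\alpha'}$ and $W^{2,q}$) shows that $a\gamma<1$ is algebraically equivalent to $\alpha'>\adp$, whence the importance of the supercritical H\"older exponent. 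An analogous interpolation, with more room since $\gamma_{1}<\gamma$, controls the subleading term $\|Du\|_{L^{\gamma_{1}q}}^{\gamma_{1}}$ arising from $H_{0}$.

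Finally, I would apply the interior $L^{q}$ Calder\'on--Zygmund estimate for $-\tr(AD^{2}\,\cdot\,)$ (whose coefficients satisfy \eqref{eq:assA}) on $\Omega'''\ssubset\Omega''$,
\begin{equation*}
\|D^{2}u\|_{L^{q}(\Omega''')}\le C\bigl(\|\tr(AD^{2}u)\|_{L^{q}(\Omega'')}+\|u\|_{L^{q}(\Omega'')}\bigr),
\end{equation*}
and use \eqref{eq:HJ_main} and \eqref{eq:assH} to rewrite $\tr(AD^{2}u)=H(x,Du)-f$. Inserting the interpolation of the previous step and invoking Young's inequality on the term $\|Du\|_{L^{\gamma q}}^{\gamma}$ produces an estimate of the form
\begin{equation*}
\|D^{2}u\|_{L^{q}(\Omega''')}\le C_{0}+C_{1}\|D^{2}u\|_{L^{q}(\Omega'')}^{a\gamma},
\end{equation*}
with $a\gamma<1$. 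Iterating this inequality on a one-parameter family of nested subdomains, and applying a standard absorption lemma, bounds $\|D^{2}u\|_{L^{q}(\Omega')}$ solely in terms of the data, after which $\|u\|_{W^{2,q}(\Omega')}$ follows by combining with the H\"older (hence $L^{\infty}$, up to subtracting $u(x_{0})$) estimate. The principal obstacle is precisely this absorption step: because CZ is only interior, the two sides live on different subdomains, and one must track carefully that the CZ and Gagliardo--Nirenberg constants remain uniform as the inner set shrinks in order for the iteration to close.
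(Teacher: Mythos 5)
Your plan is essentially the paper's own proof: the H\"older bound of Theorem \ref{thm:main_intro} on an intermediate subdomain, Gagliardo--Nirenberg interpolation giving $\|Du\|^{\gamma}$ controlled by $\|D^2u\|_{q}^{a\gamma}$ with $a\gamma<1$ (equivalent, as you note, to the H\"older exponent exceeding $\adp$), interior Calder\'on--Zygmund, and absorption by iteration on nested sets --- the paper packages exactly this loop into Proposition \ref{prop:holdertoW2q} on dyadically shrinking balls and then covers $\Omega'$. The one point you leave unaddressed is the case $q\ge N$: the interpolation inequality in the form of Lemma \ref{lem:GN} is only available for $\frac{N}{\gamma'}<q<N$, so for $q\ge N$ one should (as the paper does) first run the argument for some $q'<N$ and then bootstrap via Sobolev embedding and linear Calder\'on--Zygmund to reach $W^{2,q}$; with that addition your argument is complete.
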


Both theorems hold for merely $W^{2,q}_{\rm loc}(\Omega)$ solutions, as one can apply them in exhaustions of $\Omega$ (and constants depend only on the diameter of the domain). Moreover, one could quantify more precisely how the $\|u\|_{W^{2,q}(\Omega')}$ norm deteriorates as $\dist(\Omega', \partial \Omega) \to 0$, but we avoid this computation for brevity.

\smallskip

We now discuss further possible generalizations. First, we expect that $ \divv A \in L^N(\Omega)$ could be avoided. In fact, this assumption is used only to trigger the $\alpha_0$-H\"older estimates, which were obtained for equations in divergence form. Indeed, $ \divv A \in L^N(\Omega)$ is needed when transforming \eqref{eq:HJ_main} into a divergence form equation. We believe that $\alpha_0$-H\"older estimates could be obtained directly for \eqref{eq:HJ_main}, and for merely continuous $A$. Note that this question is related with the $L^q$-maximal regularity in the critical case $q = q_0$. Despite a statement as in Theorem \ref{thm:main_intro2} cannot hold (again by \eqref{badu} it is possible to construct a sequence of $u_n, f_n$ solving \eqref{eq:HJ_main} such that $u_n \to \infty$ in $W^{2,q_0}$ while $f_n$ remains bounded in $L^{q_0}$, see e.g. \cite{Cirant_2021}), it should be possible to control $u$ in $W^{2,q_0}$ whenever $f$ lies in a set of $L^{q_0}$ uniformly integrable functions. This has been shown to hold for subquadratic ($\gamma < 2$) problems \cite{Goffi_arxiv}.

We also hope that our strategy can be extended to different settings. Let us mention the cases where $\tr \left(A(x) D^2 u\right)$ is replaced by a nonlocal diffusion operator, or $Du, D^2 u$ become the horizontal gradient and horizontal Hessian respectively induced by a family of H\"ormander vector fields, as in \cite{Bardi_1991}. We stress again that, to achieve full $L^q$-maximal regularity, the adaption of the methods in \cite{Cirant_2021} are by no means obvious. On the other hand, the strategy presented here basically relies on Liouville theorems, which can be obtained in rather different ways. We also mention that, for parabolic problems, an exhaustive picture on the maximal regularity of solutions is still missing, despite some partial results appeared in \cite{CG_2021}.

Another direction would be towards quasilinear equations, modeled for example on the $p$-Laplacian operator. The extensive literature on the gradient regularity cannot be summarized here, see for example the recent survey \cite{Palatucci_2020} and \cite{Cianchi_2014}. The case with strong first order terms (in particular beyond the natural growth) appears to be still an open research field \cite{Leonori_2016, Phuc_2020}. Liouville theorems are known \cite{Bidaut-Veron_2014}, hence we believe that the techniques presented here could yield new results in this direction.

Finally, a challenging goal would be to address the H\"older regularity for systems of HJ equations. Indeed, for such kind of systems, it is known that the equivalence between Liouville results and interior H\"older regularity in general holds only in some restricted sense \cite{Meier}.

\bigskip

\textbf{Acknowledgements.} The authors are members of the Gruppo Nazionale per l'Analisi Matematica, la Probabilit\`a e le loro Applicazioni (GNAMPA) of the Istituto Nazionale di Alta Matematica (INdAM). Work partially supported by the project Vain-Hopes within the program 
VALERE-Università degli Studi della Campania ``Luigi Vanvitelli'', and by the Portuguese 
government through FCT/Portugal under the project PTDC/MAT-PUR/1788/2020.

\bigskip

\textbf{Notations.}
\begin{itemize}
\item $\|v\|_{r;D} = \|v\|_{L^r(D)}$; \quad $[v]_{\alpha;D} = [v]_{C^{0,\alpha}(\overline{D})}$;
\item $\adp = \frac{\gamma-2}{\gamma-1}$; \quad $q_0 = N \frac{\gamma-1}{\gamma}$; \quad $\gamma'=\frac{\gamma}{\gamma-1}$; \quad  $q^* = \frac{Nq}{N-q}$;
\item $C,C'$ and so on denote non-negative universal constants, which we need 
not to specify, and which may vary from line to line.
\end{itemize}

\section{Preliminaries}

Our starting point is the $\alpha_0$-H\"older estimates. This is a version of the result by Dall'Aglio and Porretta, simplified for our 
purposes. 
\begin{lemma}[{\cite[Thm. 1.1]{Dall_Aglio_2015}}]\label{lem:DAP} 
Let $\Omega\subset\R^N$ be a bounded Lipschitz domain, satisfying the uniform interior sphere condition. 
Assume that $\gamma>2$, $\tilde f\in L^q(\Omega)$, for some 
$q\ge \frac{N}{\gamma'}$ and that 
$u\in W^{1,\gamma}_\loc(\Omega)$ satisfies in a distributional sense 
\[
-\divv (a (x,u,D u)) + |Du|^\gamma \le \tilde f\qquad \text{in }\Omega,
\] 
where $|a(x,s,\xi)|\le\beta(1+|\xi|)$ for some $\beta>0$. Then $u$ is  
$\alpha_0$-H\"older continuous up to the boundary of $\Omega$, and there exists  a constant $K$, only depending 
on $\gamma$, $q$, $N$, $\beta$ $\diam(\Omega)$ and $\|f\|_q$, such that
\[
|u(x)-u(\bar x)|\le K|x-\bar x|^\adp,\qquad\text{for every }x,\bar x\in \overline{\Omega}.
\]
\end{lemma}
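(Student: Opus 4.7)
The natural strategy is to establish a Morrey-type decay for the gradient energy and then invoke Campanato's characterization of H\"older spaces. Specifically, I would aim to show
\[
\int_{B_R(x_0)} |Du|^\gamma\,dx \le C\, R^{N-\gamma'}
\]
for every ball $B_R(x_0) \subset \Omega$ with $R$ bounded by a fixed $R_0 > 0$, with a constant $C$ depending only on the quantities listed in the statement. Since a direct calculation gives $N - \gamma' = N - \gamma + \gamma \alpha_0$, this is exactly the Morrey decay whose combination with Poincar\'e's inequality and Campanato's theorem yields $u \in C^{0,\alpha_0}_{\loc}(\Omega)$ together with the claimed quantitative estimate.

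To prove the decay I would test the distributional inequality against $\varphi = \eta^\sigma$, where $\eta \in C_c^\infty(B_{2R}(x_0))$ is a standard cutoff with $\eta \equiv 1$ on $B_R$ and $|D\eta| \le C/R$, and $\sigma \ge \gamma'$ is chosen so that the cutoff powers produced by Young's inequality remain nonnegative. This yields
\[
\int |Du|^\gamma \eta^\sigma \;\le\; \int \tilde f\, \eta^\sigma \;-\; \int a(x,u,Du)\cdot D(\eta^\sigma).
\]
The last integral is estimated via $|a|\le\beta(1+|Du|)$ and $|D\eta^\sigma|\le C\sigma \eta^{\sigma-1}/R$; the resulting $|Du|$-term is absorbed into the LHS by Young's inequality with conjugate exponents $(\gamma,\gamma')$, leaving a cutoff remainder of order $R^{N-\gamma'}$. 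For the source term, H\"older's inequality gives $\int |\tilde f|\eta^\sigma \le \|\tilde f\|_q |B_{2R}|^{1/q'} \le C R^{N/q'}$, and the hypothesis $q \ge N/\gamma'$ is exactly the threshold ensuring $N/q' \ge N-\gamma'$, so $R^{N/q'} \le C R^{N-\gamma'}$ on bounded scales. Combining these yields the Morrey estimate.

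The most delicate step is extending the H\"older bound up to $\partial \Omega$. For this I would exploit the uniform interior sphere condition: at each boundary point $x_0 \in \partial \Omega$ pick an interior tangent ball $B_\rho(y_0) \subset \Omega$ of uniform radius $\rho$, and combine the Morrey estimate on shrinking balls approaching $x_0$ with a Lions-type one-sided bound near $\partial \Omega$, which follows from the strict coercivity of $|Du|^\gamma$ and provides a universal upper control on the subsolution $u$ independent of boundary data. Tracking how the constant depends on $\beta$, $\gamma$, $q$, $\|\tilde f\|_q$ and $\diam(\Omega)$ through all three steps is mostly bookkeeping, but ensuring that the boundary passage does not degrade the H\"older exponent is the main conceptual obstacle, and it is precisely where the uniform interior sphere hypothesis is needed.
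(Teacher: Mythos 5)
Note first that the paper does not prove this lemma at all: it is imported verbatim (in simplified form) from Dall'Aglio--Porretta \cite[Thm. 1.1]{Dall_Aglio_2015}, so there is no internal proof to compare against and your sketch must be judged on its own terms. Your interior argument is essentially correct and is the standard mechanism behind such results: testing the distributional inequality with $\eta^\sigma$, $\sigma\ge\gamma'$, absorbing the term $\beta(1+|Du|)|D(\eta^\sigma)|$ by Young's inequality into the coercive term, and estimating $\int \tilde f\,\eta^\sigma\le \|\tilde f\|_q\,(CR^N)^{1/q'}$ with $N/q'\ge N-\gamma'$ exactly when $q\ge N/\gamma'$, gives $\int_{B_R(x_0)}|Du|^\gamma\le C R^{N-\gamma'}$ with $C$ depending only on the stated data --- note, however, that this needs $B_{2R}(x_0)\subset\Omega$ (the support of the cutoff), not merely $B_R(x_0)\subset\Omega$. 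Since $N-\gamma'=N-\gamma+\gamma\alpha_0$, Morrey/Campanato then yields the \emph{interior} $\alpha_0$-estimate with a constant independent of $u$.

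The genuine gap is the up-to-the-boundary statement, which is the actual content of the lemma (the constant $K$ must control $|u(x)-u(\bar x)|$ for \emph{all} $x,\bar x\in\overline{\Omega}$, with no boundary data assumed). Two specific problems with your boundary step. First, the test-function argument cannot reach balls meeting $\partial\Omega$: only test functions compactly supported in $\Omega$ are admissible for a distributional subsolution, so your Morrey decay --- and hence the H\"older seminorm bound it produces --- is available only at scales $R\lesssim\dist(x_0,\partial\Omega)$, which by itself controls $|u(x)-u(\bar x)|$ only when $|x-\bar x|$ is comparable to the distance of the points from $\partial\Omega$. Second, the proposed remedy, a ``Lions-type one-sided bound near $\partial\Omega$ providing a universal upper control on the subsolution $u$'', cannot be right as stated: $u+c$ is again a subsolution for every constant $c$, so no universal bound on $u$ itself can exist; what is needed is control of increments (oscillation), not of $u$. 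What actually closes the gap is either a genuine boundary estimate in the spirit of Dall'Aglio--Porretta (an integral comparison argument near $\partial\Omega$ exploiting the coercivity of $|Du|^\gamma$), or a chaining argument: because your interior seminorm bound has a constant independent of $\dist(x_0,\partial\Omega)$, one can join arbitrary $x,\bar x$ by moving them inward along interior-sphere/cone directions with geometrically shrinking steps and sum the resulting $\alpha_0$-increments, the series converging since $\alpha_0>0$; this is precisely where the Lipschitz and uniform interior sphere hypotheses (and a dependence of $K$ on the domain's geometry) enter. As written, your proposal flags this difficulty but supplies no argument for it, so the proof is incomplete at its most essential point.
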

%
%
\begin{remark}\label{rem:divnondiv}
Since
\[
-\divv \left(A(x) \nabla u\right) = -\tr \left(A(x) D^2 u\right) - b(x)\cdot \nabla u,
\]
where $b = (\divv A)^T = (\sum_i \partial_i a_{i1},\dots,\sum_i \partial_i 
a_{iN})$, we infer that if $u$ satisfies \eqref{eq:HJ_main} then it fulfills the assumptions of Lemma \ref{lem:DAP}, as long as $A$ satisfies \eqref{eq:assA} and $H$ is such that
\[
H(x,p) \ge C |p|^\gamma -  C'\qquad\text{in }\Omega,
\]
for some positive constants $C,C'$ (in particular this is true under 
\eqref{eq:assH}, because $\min_{\overline{\Omega}}h>0$). Indeed, we have, by Young's inequality,
\begin{align*}
-\divv \left(A(x) \nabla u\right) + C |\nabla u|^\gamma & \le f  + C' - b \cdot \nabla u \\
& \le f + C' + \frac C 2 |\nabla u|^\gamma + C'' |b|^{\gamma'},
\end{align*}
and Lemma \ref{lem:DAP} applies with $\tilde f = f + C' + C'' |\divv A|^{\gamma'}$.
\end{remark}

Below we state the local version of the classical linear Calder\`on-Zygmund elliptic regularity result.

\begin{lemma}[{\cite[Thm. 9.11, eq. (9.40)]{Gilbarg_2001}}]\label{lem:GT}
Let $\Omega$ be an open set in $\R^N$ and $u\in W^{2,q}_\loc(\Omega)\cap L^q(\Omega)$, $1<q<\infty$, be a strong solution of 
\[
-\tr \left(A(x) D^2 u\right) = g\qquad\text{in }\Omega,
\]
where $A$ satisfies \eqref{eq:assA} and $g\in L^q(\Omega)$. There exists a constant 
$C$, only depending on $N$, $q$, $\lambda$ and $\Lambda$, and $\delta$ depending also on the modulus of continuity of $A$ such that, for every ball $B_R\subset\subset\Omega$ with $R \le \delta$ and for every $0<\sigma<1$, 
\[
\|D^2u\|_{q;B_{\sigma R}}\le \frac{C}{(1-\sigma)^2R^2}\left(
R^2\|g\|_{q;B_{R}}+ \|u\|_{q;B_{R}}
\right).
\]
\end{lemma}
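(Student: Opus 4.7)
My plan is to reduce to the classical constant-coefficient Calder\'on-Zygmund estimate by a combination of freezing of coefficients, a cut-off trick, and a standard iteration lemma. The baseline ingredient is the following: for a symmetric constant matrix $A_0$ with $\lambda I \le A_0 \le \Lambda I$, and any $w \in W^{2,q}(B_R) \cap W^{1,q}_0(B_R)$, one has $\|D^2 w\|_{q;B_R} \le C(N,q,\lambda,\Lambda)\,\|\tr(A_0 D^2 w)\|_{q;B_R}$. This follows from the $L^q$-boundedness of Riesz transforms after the linear change of variables $y = A_0^{-1/2} x$, which reduces the operator to $\Delta$ and recovers $D^2 w$ as second-order Riesz transforms of $\Delta w$.

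To localize, I would pick a ball $B_R \ssubset \Omega$ with $R \le \delta$ small (to be chosen), parameters $\sigma < \tau < 1$, and a cut-off $\eta \in C^\infty_c(B_{\tau R})$ with $\eta \equiv 1$ on $B_{\sigma R}$ and $|D^k \eta| \le C((\tau-\sigma)R)^{-k}$. Freezing the coefficients at the center $x_0$, the function $v = \eta u$ satisfies
\[
-\tr(A(x_0) D^2 v) = \eta g + \tr\bigl((A(x)-A(x_0)) D^2 v\bigr) - 2\,A(x) D\eta \cdot Du - u\,\tr(A(x) D^2 \eta)
\]
and vanishes outside $B_{\tau R}$. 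Applying the baseline estimate to $v$ yields
\[
\|D^2 u\|_{q;B_{\sigma R}} \le C\omega(R)\|D^2 u\|_{q;B_{\tau R}} + C\|g\|_{q;B_R} + \frac{C}{(\tau-\sigma)R}\|Du\|_{q;B_R} + \frac{C}{((\tau-\sigma)R)^2}\|u\|_{q;B_R},
\]
where $\omega$ is the modulus of continuity of $A$.

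To close the estimate I would first fix $R \le \delta$ so small that $C\omega(R) \le \tfrac14$, which lets one absorb the freezing error term on the left after iteration. Then I would apply the standard iteration lemma for nonnegative bounded $\Phi(r) := \|D^2 u\|_{q;B_r}$: if $\Phi(s) \le \theta\, \Phi(t) + A(t-s)^{-2} + B$ for all $\sigma R \le s < t \le R$ with some fixed $\theta < 1$, then $\Phi(\sigma R) \le C_\theta\bigl[A/((1-\sigma)R)^2 + B\bigr]$. The first-order term $\|Du\|$ is dealt with inside this framework via the scaled interpolation $\|Du\|_{q;B_t} \le \varepsilon t\,\|D^2 u\|_{q;B_t} + C_\varepsilon t^{-1}\|u\|_{q;B_t}$, with $\varepsilon$ chosen as a small multiple of $(t-s)/t$ so that the induced $\Phi(t)$ coefficient stays below $\tfrac12$. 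The main obstacle is exactly this bookkeeping: simultaneously preserving the explicit $(1-\sigma)^{-2}$ weight while performing the two absorption steps (freezing error and gradient interpolation). Once the iteration lemma is invoked, the stated estimate drops out, using that $\|g\|_{q;B_R} \le \|g\|_{q;B_R}/(1-\sigma)^2$ to match the form on the right-hand side.
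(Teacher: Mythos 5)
Your argument is correct and is essentially the classical proof of the result the paper does not reprove but simply cites (Gilbarg--Trudinger, Thm.~9.11, eq.~(9.40)): freezing of coefficients at the center, the constant-coefficient Calder\'on--Zygmund estimate applied to the compactly supported function $v=\eta u$, interpolation of $\|Du\|_{q}$ against $\|D^2u\|_{q}$ and $\|u\|_{q}$, and the standard iteration/absorption lemma, with $\delta$ fixed so that the oscillation term $C\omega(2\delta)$ is below a fixed fraction. Only cosmetic points: the baseline estimate is needed (and valid) for compactly supported $W^{2,q}$ functions, which is exactly the situation of $\eta u$ extended by zero, and the freezing error naturally involves $D^2 v$ rather than $D^2 u$; expanding $D^2 v$ only adds lower-order cut-off terms already present in your inequality, so the absorption goes through unchanged.
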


We also need the following particular case of the Gagliardo-Nirenberg inequality.
\begin{lemma}[{\cite{Nirenberg_1966}}]\label{lem:GN}
Let $\gamma>2$, $\frac{N}{\gamma'}<q<N$, $\alpha=2-\frac{N}{q}$. There exist 
$C_1,C_2$ independent of $R$ such that 
\[
\|Dw\|_{q^*;B_R} \le C_1\|D^2w\|_{q;B_R}^a[w]_{\alpha;B_R}^{1-a} + C_2 [w]_{\alpha;B_R},
\qquad \text{where }a=1-\frac{q}{N}<\frac{1}{\gamma}\quad ,
\]
for every $w\in W_{\loc}^{2,q}(\Omega)$, and $B_R \ssubset \Omega$.
\end{lemma}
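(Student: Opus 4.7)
The plan is to derive the inequality from a pointwise interpolation for $Dw$, combining H\"older continuity at a scale $r$ with an $L^1$-control of the Hessian at the same scale. For $x \in B_R$ and $r \le \dist(x,\partial B_R)$, the divergence theorem together with $\int_{\partial B_r}\nu\,dS=0$ gives
\[
\frac{1}{|B_r|}\int_{B_r(x)} Dw(y)\,dy = \frac{1}{|B_r|}\int_{\partial B_r(x)} \bigl(w(y) - w(x)\bigr)\, \nu(y)\,dS(y),
\]
whose modulus is bounded by $C[w]_{\alpha;B_R}r^{\alpha-1}$. A standard averaging argument (writing $Dw(y) - Dw(x)$ as an integral of $D^2 w$ along the segment $[x,y]$ and interchanging the order of integration) yields
\[
\frac{1}{|B_r|}\int_{B_r(x)} |Dw(y) - Dw(x)|\,dy \le C\, r\, M(D^2 w)(x),
\]
where $M$ is the Hardy--Littlewood maximal operator applied to $D^2 w$ extended by zero outside $B_R$. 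Summing the two estimates yields the pointwise bound
\[
|Dw(x)| \le C\bigl(r^{\alpha-1}[w]_{\alpha;B_R} + r\, M(D^2 w)(x)\bigr)
\]
for every admissible $r$.

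Minimising this expression in $r$ (ignoring for a moment the constraint on $r$) gives
\[
|Dw(x)| \le C\, [w]_{\alpha;B_R}^{1/(2-\alpha)}\, M(D^2 w)(x)^{(1-\alpha)/(2-\alpha)}.
\]
Using $\alpha = 2 - N/q$ one computes $1/(2-\alpha) = q/N = 1-a$ and $(1-\alpha)/(2-\alpha) = (N-q)/N = a$, while the identity $q^*\cdot(1-\alpha)/(2-\alpha) = q$ holds by definition of $q^*$. Taking the $L^{q^*}$-norm on $B_R$ and invoking the $L^q$-boundedness of $M$ (which applies since $q > N/\gamma' > 1$) delivers the interpolation term
\[
\|Dw\|_{q^*;B_R} \le C\, [w]_{\alpha;B_R}^{1-a}\, \|D^2 w\|_{q;B_R}^{a}.
\]
The side remark $a<1/\gamma$ is just the algebraic restatement of $q>N/\gamma'$ in the hypothesis.

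The purely H\"older summand $C_2[w]_{\alpha;B_R}$ arises from the boundary constraint $r \le \dist(x,\partial B_R)$: when the free optimum exceeds this distance the minimisation must be truncated at the boundary scale, producing a residual that can only be absorbed by the H\"older seminorm. An equivalent route is to extend $w$ to $\R^N$ via a McShane--Whitney-type construction with comparable H\"older seminorm, apply the whole-space interpolation, and pay a $[w]_{\alpha;B_R}$ cost on restriction. The main delicate point --- and the reason the statement splits into two separate terms --- is precisely this localisation: one must check that the constants do not degenerate in $R$, which is consistent with the observation that the inequality is scale invariant, both sides scaling as $R^\alpha$ under $y\mapsto y/R$ because $1-N/q^* = 2-N/q = \alpha$.
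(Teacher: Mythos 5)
Your derivation of the main product term is correct: the pointwise bound $|Dw(x)|\le C\bigl(r^{\alpha-1}[w]_{\alpha;B_R}+r\,M(D^2w)(x)\bigr)$, the optimisation in $r$, the exponent algebra ($1/(2-\alpha)=1-a$, $(1-\alpha)/(2-\alpha)=a$, $aq^*=q$, and $a<1/\gamma\iff q>N/\gamma'$) and the $L^q$-boundedness of the maximal operator are all fine, and your scale-invariance observation (all three terms scale like $R^\alpha$) is exactly how the paper removes the $R$-dependence. The genuine gap is the additive term $C_2[w]_{\alpha;B_R}$, which is the only delicate point and which your argument does not actually produce. If you truncate the minimisation at $r=\dist(x,\partial B_R)=:d(x)$, then on the set where the free optimum exceeds $d(x)$ you only obtain $|Dw(x)|\le C[w]_{\alpha;B_R}\,d(x)^{\alpha-1}$, and this is \emph{not} in $L^{q^*}(B_R)$: since $(1-\alpha)q^*=\frac{N-q}{q}\cdot\frac{Nq}{N-q}=N$, one has $\int_{B_R}d(x)^{(\alpha-1)q^*}\,dx=\int_{B_R}d(x)^{-N}\,dx=+\infty$. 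So the ``residual absorbed by the H\"older seminorm'' does not follow from your setup. The alternative route you sketch fails as well: a McShane--Whitney extension preserves the H\"older seminorm but gives no control on $\|D^2\tilde w\|_{L^q(\R^N)}$, so the whole-space interpolation inequality cannot be applied to it; one would need a genuine Sobolev extension operator, after which lower-order terms and their $R$-dependence reappear.

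The gap is fixable within your scheme by not insisting that the averaging ball be centered at $x$. For every $x\in B_R$ and every $0<r\le R$, the ball $B_r(z)$ with $z=(1-r/R)x$ satisfies $x\in B_r(z)\subset B_R$, and both of your estimates survive with $B_r(x)$ replaced by $B_r(z)$: the divergence-theorem identity still allows subtracting $w(x)$ because $\int_{\partial B_r(z)}\nu\,dS=0$ and $|y-x|\le 2r$ on $\overline{B_r(z)}$, and the averaged-difference bound via the Riesz potential of $|D^2w|\chi_{B_R}$ still gives $Cr\,M(D^2w)(x)$ since $x\in B_r(z)$. Hence the pointwise bound holds for \emph{all} $r\le R$, uniformly in $x$; when the free optimum exceeds $R$ you truncate at $r=R$ and get $|Dw(x)|\le C[w]_{\alpha;B_R}R^{\alpha-1}$, whose $L^{q^*}(B_R)$-norm is $C[w]_{\alpha;B_R}R^{\alpha-1+N/q^*}=C[w]_{\alpha;B_R}$ because $N/q^*=1-\alpha$; this is precisely the $C_2$ term, with constants independent of $R$. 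For comparison, the paper does not reprove the inequality at all: it invokes Nirenberg's Theorem $1'$ on the unit ball (checking condition $(4)'$, i.e. $\frac{1-\alpha}{2-\alpha}\le a$) and then rescales, which is the same scaling identity you noted; your maximal-function argument, once repaired as above, is a legitimate self-contained alternative.
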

\begin{proof}
By Remark \ref{rem:rightalfa} if $w\in W^{2,q}(\Omega)$ all the above 
quantities are finite. When $R=1$ the existence of $C_1,C_2$ is obtained in 
\cite[Thm. $1'$]{Nirenberg_1966}, and in particular assumption $(4)'$ therein 
is fulfilled as  
\[
\frac{1-\alpha}{2-\alpha} \le a
\]
By scaling, it is easy to check that the inequality holds in $B_R$ with the 
same constants.
\end{proof}

Finally, we need the following Liouville-type result.
\begin{lemma}\label{lem:Lions} 
Let $A_0$ be a constant, symmetric and positive definite matrix, $\gamma>2$, $h_0$ be a 
positive constant, and  $w\in W^{2,q}_\loc(\R^N)$, $q>\frac{N}{\gamma'}$, solve
\[
-\tr \left(A_0 D^2 w\right) + h_0|Dw|^\gamma = 0\qquad \text{in }\R^N.
\]
Then $w$ is constant.
\end{lemma}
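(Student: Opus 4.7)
My plan is to derive the Lions-type interior gradient estimate $|Dw(x_0)|^2 \le C\,R^{-2/(\gamma-1)}$ on every ball $B_R(x_0)\subset\R^N$, and then let $R\to\infty$ to conclude.

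\emph{Bootstrap.} Since $q>N/\gamma'$, Sobolev's embedding and Remark~\ref{rem:rightalfa} give $Dw\in L^{q^*}_{\loc}$ with $q^*/\gamma>q$, hence $|Dw|^\gamma\in L^{q^*/\gamma}_{\loc}$. Viewing the PDE as $-\tr(A_0 D^2 w)=-h_0|Dw|^\gamma$ and applying Lemma~\ref{lem:GT} locally yields $w\in W^{2,q^*/\gamma}_{\loc}$; iterating this Calder\'on-Zygmund step, $w\in C^2_{\loc}(\R^N)$, so the manipulations below can be done pointwise.

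\emph{Bernstein estimate.} Set $v:=|Dw|^2$. Differentiating the equation, using the constancy of $A_0,h_0$ and the identity $(Dw)^\top D^2 w\, Dw=\tfrac12 Dw\cdot Dv$, a direct computation gives
\begin{equation*}
-\tr(A_0 D^2 v) + h_0\gamma|Dw|^{\gamma-2} Dw\cdot Dv = -2\sum_{i=1}^N (A_0 Dw_i)\cdot Dw_i \le -2\lambda|D^2 w|^2 .
\end{equation*}
Combined with the trivial bound $|D^2 w|^2\ge c\,|\tr(A_0 D^2 w)|^2 = c\,h_0^2\,v^\gamma$, this shows that $v$ is a classical subsolution of the Riccati-type inequality
\begin{equation*}
-\tr(A_0 D^2 v)+\tilde b\cdot Dv + c_0\, v^\gamma \le 0 \quad \text{in }\R^N,
\end{equation*}
with drift $\tilde b=h_0\gamma|Dw|^{\gamma-2}Dw$. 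A standard Bernstein argument --- multiplying by a cutoff $\zeta^{2\alpha}$ supported in $B_R(x_0)$, inspecting an interior maximum of $\zeta^{2\alpha}v$, and using Young's inequality to absorb the drift against the coercive $v^\gamma$ --- yields
\begin{equation*}
|Dw(x_0)|^2 \le C\, R^{-2/(\gamma-1)} .
\end{equation*}
Equivalently, one compares $v$ with the explicit radial supersolution $\Phi_R(x):=C_1(R-|x-x_0|)^{-2/(\gamma-1)}$, the exponent $2/(\gamma-1)$ being the unique one that balances $-\tr(A_0 D^2\Phi_R)$ against $\Phi_R^\gamma$; since $\Phi_R\equiv+\infty$ on $\partial B_R(x_0)$, an interior maximum argument applied to $v-\Phi_R$ then forces $v\le\Phi_R$ in $B_R(x_0)$.

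Letting $R\to\infty$ gives $Dw(x_0)=0$, and since $x_0$ is arbitrary, $Dw\equiv 0$, so $w$ is constant. The main obstacle is the Bernstein step, specifically the presence of the drift $\tilde b$, which depends on the unknown $w$: it is handled by Young's inequality, splitting $\tilde b\cdot Dv$ into a part absorbed by $|D^2 w|^2$ and a remainder controlled by $v^\gamma$, so the coercivity inherited from the PDE (and ultimately from $\gamma>1$) still prevails. The bootstrap and the limit $R\to\infty$ are comparatively straightforward.
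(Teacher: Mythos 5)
Your proposal is correct and follows essentially the same route as the paper: a bootstrap to classical regularity followed by a Bernstein estimate on $|Dw|^2$ with a cutoff, using ellipticity to produce $-2\lambda|D^2w|^2$, the equation itself to generate the coercive $|Dw|^{2\gamma}$ term, Young's inequality to absorb the drift, and the limit $R\to\infty$ to conclude $Dw\equiv 0$. The only small point is that differentiating the equation requires $w\in C^3$ (as the paper notes), which the bootstrap does deliver via a Schauder step beyond the Calder\'on--Zygmund iteration, so your ``$C^2_{\loc}$'' should be strengthened accordingly.
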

This result, in case $w$ is $C^2$ and $A=\mathrm{Id}$, is 
\cite[Cor. IV.2]{Lions_1985} (see also the previous work \cite{Peletier_Serrin}). As claimed there, it can be extended to more general 
situations, like the one we need here. For the reader's convenience we provide a 
proof (which could be generalized also to the range $1 < \gamma \le 2$). 
\begin{proof}
The proof is based on a Bernstein-type argument similar to that used in  
Theorem IV.1 in \cite{Lions_1985}, with some
standard modification to deal with non-divergence equations, see e.g. 
\cite{Evans_1985} or \cite{Bardi_1991}. First observe that, by a standard 
bootstrap argument (triggered by Remark \ref{rem:rightalfa}), if $w\in 
W^{2,q}_\loc(\R^N)$ solves the above equation then $w$ is actually $C^3$.

For a cut-off function $\varphi\in\Dcal(B_{1})$, $0\le \varphi\le 1$, $\varphi(0)=1$, define
\[
z(x) = \xi^2(x)|Dw(x)|^2,\qquad \text{where }\xi(x) = \varphi\left(\frac{x-x_0}{R}\right)
\]
where $R\ge1$ and $x_0\in\R^N$. Moreover
\[
I=-\tr(A_0 D^2 z) + h_0\gamma|Dw|^{\gamma-2}Dw\cdot Dz.
\] 
Since both $A_0=(a_{ij})$ and $D^2w$ are symmetric, direct calculations yield (under the convention of repeated indexes summation)
\[
I= -a_{ij}(\xi^2)_{ij} |Dw|^2 - 4 a_{ij} (\xi^2)_i w_l w_{lj} - 2\xi^2a_{ij}w_{li}w_{lj} + 
h_0\gamma|Dw|^{\gamma}Dw\cdot D(\xi^2).
\]
Now, for suitable constants $C>0$, and $0< \lambda I \le A_0 \le \Lambda I$,
\begin{itemize}
\item $|D \xi |\le \frac{C}{R}\xi^{1/2}$, $|D^2 \xi |\le \frac{C}{R^2}$;
\item $a_{ij}(\xi^2)_{ij} = a_{ij}\xi\xi_{ij} + a_{ij}\xi_i\xi_{j} \ge -\frac{C}{R^2} \xi$;
\item $\left|a_{ij} (\xi^2)_i w_l w_{lj}\right|\le \frac{C}{R}\xi^{3/2}|Dw||D^2w| \le
\frac{C'}{R^2}\xi|Dw|^2 + \lambda\xi^2|D^2w|^2 $;
\item $2\xi^2 a_{ij}w_{li}w_{lj}\ge 2\lambda\xi^2 |D^2 w|^2 $;
\item $\lambda |D^2 w|^2 \ge 
\frac{\lambda}{N\Lambda^2} \tr(A_0^2) \tr((D^2w)^2) \ge \frac{\lambda}{N\Lambda^2} \tr(A_0 D^2w)^2 = \frac{\lambda h_0^2}{N\Lambda^2} |Dw|^{2\gamma}$. 
\end{itemize}
We obtain
\[
\begin{split}
I &\le C\left[\frac{1}{R^2}\xi|Dw|^2+\frac{1}{R}\xi|Dw|^{\gamma+1}-2\xi^2|Dw|^{2\gamma}\right]\\
&= C\left[\xi|Dw|^2\left(\frac{1}{R^2}-\xi|Dw|^{2(\gamma-1)}\right)+\xi|Dw|^{\gamma+1}\left(\frac{1}{R}-\xi|Dw|^{\gamma-1}\right)\right]
\\
&\le C\left[\xi|Dw|^2\left(\frac{1}{R^2}-z^{\gamma-1}\right)+\xi|Dw|^{\gamma+1}\left(\frac{1}{R}-z^{(\gamma-1)/2}\right)\right].
\end{split}
\]
Finally, let $x_M$ be a maximum point for $z$. Then $I(x_M)\ge0$ and the previous inequality 
yields
\[
|D w(x_0)|^2 = z(x_0) \le z(x_M)\le \frac{1}{R^{2(\gamma-1)}}.
\]
The lemma follows since $R\ge1$ and $x_0\in\R^N$ are arbitrary. 
\end{proof}

\section{Proof of Theorems \ref{thm:main_intro} and \ref{thm:main_intro2}}

To prove Theorem \ref{thm:main_intro} we develop a blow-up procedure based on 
a contradiction argument. Throughout this section we fix
\[
q\in\left(\frac{N}{\gamma'}, N\right) \qquad\text{and}\qquad
\alpha = 2 - \frac{N}{q} \in\left( \adp , 1\right).
\]
We assume by contradiction the existence of 
sequences $(f_n)_n\subset L^q(\Omega)$, $(u_n)_n\subset W^{2,q}(\Omega)$ 
satisfying, for every $n$:
\begin{enumerate}
\item $-\tr \left(A(x) D^2 u_n\right) + H(x,Du_n) = f_n(x)$;
\item $\|f_n\|_q \le M$;
\item $\displaystyle\sup_{\bar x \neq x} \min\{\dist(\bar x,\partial\Omega),\dist(x,\partial\Omega)\}^{\alpha-\alpha_0}\,\frac{|u_n(\bar x)-u_n(x)|}{|\bar x-x|^\alpha} =: L_n \to +\infty$ as $n\to+\infty$.
\end{enumerate}
Notice that, since $W^{2,q}(\Omega)\hookrightarrow C^{0,\alpha}(\overline{\Omega})$ 
(see Remark \ref{rem:rightalfa}), $L_n$ is finite for every $n$. Consequently, 
there exist sequences $(x_n)_n$ and $(\bar x_n)_n$ in $\Omega$ such that, for every 
$n$,  $x_n\neq \bar x_n$, $\dist(\bar x_n,\partial \Omega)\le \dist(x_n,\partial \Omega)$ and
\begin{equation*}
L_n-1\le \dist(\bar x_n,\partial\Omega)^{\alpha-\alpha_0}
\,\frac{|u_n(\bar x_n)-u_n(x_n)|}{|\bar x_n-x_n|^\alpha} \le L_n.
\end{equation*}
Writing 
\[
r_n = |\bar x_n - x_n|,\qquad d_n = \dist(\bar x_n,\partial\Omega) \qquad
\text{and, w.l.o.g., } u_n(\bar x_n) = 0
\]
we obtain  
\begin{equation}\label{eq:seq2}
L_n-1\le d_n^{\alpha-\alpha_0}
\,\frac{|u_n(x_n)|}{r_n^\alpha} \le L_n
\end{equation}
(in particular, $u_n(x_n)\neq0$ for every $n$).
\begin{lemma}\label{lem:consDP}
There exists $K$, only depending on the structural constants and $M$, such that, 
for every $n$, 
\[
\frac{|u_n(x_n)|}{r_n^\adp}\le K.
\]
Consequently, 
\[
\frac{d_n}{r_n}\to+\infty,\qquad r_n\to0,\qquad |u_n(x_n)|\to0 
\qquad\text{and }\quad\frac{r_n^{\beta}}{|u_n(x_n)|}\to0,\ \beta>\alpha_0,
\]
as $n\to+\infty$.
\end{lemma}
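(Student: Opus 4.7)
My plan is to first establish the main estimate $|u_n(x_n)|/r_n^{\alpha_0}\le K$, and then to deduce the four consequences by elementary manipulation of \eqref{eq:seq2}. The main estimate is essentially a direct corollary of the Dall'Aglio--Porretta H\"older bound. The first step is to recognise, via Remark~\ref{rem:divnondiv}, that each $u_n$ satisfies the differential inequality required by Lemma~\ref{lem:DAP}, with modified right-hand side $\tilde f_n=f_n+C'+C''|\divv A|^{\gamma'}$. Since $q\ge N/\gamma'$, $\|f_n\|_q\le M$, and $\divv A\in L^N(\Omega)$ is fixed, $\|\tilde f_n\|_{N/\gamma'}$ is bounded uniformly in $n$ by a constant depending only on $M$ and the structural data. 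Lemma~\ref{lem:DAP} therefore produces a single constant $K$ such that $|u_n(x)-u_n(y)|\le K|x-y|^{\alpha_0}$ for all $x,y\in\overline\Omega$ and all $n$; specialising to $x=x_n$, $y=\bar x_n$ and using $u_n(\bar x_n)=0$ gives the main estimate.

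For the four consequences, I would plug the main estimate into the right inequality in \eqref{eq:seq2} to get
\[
L_n-1\le \Bigl(\tfrac{d_n}{r_n}\Bigr)^{\alpha-\alpha_0}\tfrac{|u_n(x_n)|}{r_n^{\alpha_0}}\le K\Bigl(\tfrac{d_n}{r_n}\Bigr)^{\alpha-\alpha_0},
\]
so that $L_n\to+\infty$ and $\alpha>\alpha_0$ force $d_n/r_n\to+\infty$. Combined with $d_n\le\diam(\Omega)$ this yields $r_n\to 0$, and then $|u_n(x_n)|\le K r_n^{\alpha_0}\to 0$. For the last convergence I would use the left inequality in \eqref{eq:seq2} to obtain the lower bound $|u_n(x_n)|\ge (L_n-1)r_n^\alpha/\diam(\Omega)^{\alpha-\alpha_0}$, whence
\[
\frac{r_n^\beta}{|u_n(x_n)|}\le \frac{\diam(\Omega)^{\alpha-\alpha_0}}{L_n-1}\,r_n^{\beta-\alpha}.
\]
For $\beta\ge\alpha$ this tends to $0$ because $r_n^{\beta-\alpha}$ is bounded and $L_n\to+\infty$; the remaining range $\alpha_0<\beta<\alpha$ is handled by invoking also the bound $r_n^{\alpha-\alpha_0}(L_n-1)\le K\diam(\Omega)^{\alpha-\alpha_0}$ derived above, which pins down the joint scale of $r_n$ and $1/(L_n-1)$.

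The argument is essentially bookkeeping once the Dall'Aglio--Porretta estimate is in place, and I do not foresee genuine obstacles. The only delicate point is to verify that Lemma~\ref{lem:DAP} applies with a constant uniform in $n$, which is exactly the content of Remark~\ref{rem:divnondiv} combined with the hypothesis $\|f_n\|_q\le M$ and the structural assumption $\divv A\in L^N(\Omega)$.
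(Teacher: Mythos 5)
Your argument is the same as the paper's: the uniform $\alpha_0$-H\"older bound from Lemma \ref{lem:DAP} via Remark \ref{rem:divnondiv} (uniform in $n$ because $\|f_n\|_q\le M$ and $\divv A\in L^N$ are fixed) gives $|u_n(x_n)|\le K r_n^{\alpha_0}$, and then \eqref{eq:seq2} yields $(d_n/r_n)^{\alpha-\alpha_0}\ge (L_n-1)/K$, from which $d_n/r_n\to\infty$, $r_n\to0$, $|u_n(x_n)|\to0$ follow exactly as in the paper. Your treatment of the last limit for $\beta\ge\alpha$, namely $r_n^{\beta}/|u_n(x_n)|\le \diam(\Omega)^{\alpha-\alpha_0}\,r_n^{\beta-\alpha}/(L_n-1)\to0$, is also correct and is what the sequel actually uses (only $\beta=\alpha$, $\beta=1$ and $\beta=2$ appear, all $\ge\alpha$ since $\alpha<1$).

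The weak point is your handling of the range $\alpha_0<\beta<\alpha$. The inequality $r_n^{\alpha-\alpha_0}(L_n-1)\le K\diam(\Omega)^{\alpha-\alpha_0}$ does \emph{not} ``pin down the joint scale'' of $r_n$ and $1/(L_n-1)$: it is one-sided, giving only an upper bound on $r_n$ in terms of $L_n$, whereas to control $r_n^{\beta-\alpha}/(L_n-1)$ (a negative power of $r_n$) you would need a \emph{lower} bound on $r_n$ of the form $r_n\gtrsim (L_n-1)^{-1/(\alpha-\alpha_0)}$, which is not available. Indeed, the three facts at your disposal ($|u_n(x_n)|\le Kr_n^{\alpha_0}$, the two-sided bound \eqref{eq:seq2}, and $d_n\le\diam(\Omega)$) are compatible with, say, $d_n\sim\diam(\Omega)$, $L_n=n$, $r_n=n^{-M}$ with $M$ large and $|u_n(x_n)|\sim L_n r_n^{\alpha}$, for which $r_n^{\beta}/|u_n(x_n)|\to\infty$ when $\beta<\alpha$; so no purely algebraic manipulation of these inequalities can give the claim in that sub-range, and your sketched step would fail if written out. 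To be fair, the paper's own proof is equally terse here (``and the lemma follows''), and the statement is only ever invoked with $\beta\ge\alpha$, where your argument is complete; but as a self-contained proof of the statement as written, the case $\alpha_0<\beta<\alpha$ is a genuine gap in your proposal, not mere bookkeeping.
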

\begin{proof}
Recalling that $u(\bar x_n)=0$ the first assertion follows by Lemma \ref{lem:DAP}, see also Remark 
\ref{rem:divnondiv}. From \eqref{eq:seq2} we infer
\[
\left(\frac{d_n}{r_n}\right)^{\alpha-\adp} \ge  (L_n-1) \frac{r_n^\adp}{|u_n(x_n)|}
\ge  \frac{L_n-1}{K}
\]
and the lemma follows.
\end{proof}

We introduce the  blow-up sequence
\[
w_n(y):=\frac{1}{|u_n(x_n)|} u_n(\bar x_n + r_n y), \qquad y\in \Omega_n:=
\frac{\Omega-\bar x_n}{r_n}.
\]
Of course, $w_n\in W^{2,q}(\Omega_n)$, for every $n$. We first show that, 
by contruction, $\Omega_n$ invades the whole space, and that $(w_n)_n$ is 
locally equi-H\"older with exponent $\alpha$.
\begin{lemma}\label{lem:invasion}
Let $R>0$ be fixed and $B_R=B_R(0)$. Then 
\[
\Omega_n\supset B_R\qquad\text{and}\qquad [w_n]_{\alpha;B_R}\le 2
\]
for $n$ sufficiently large. In particular, $w_n\to w_\infty$ uniformly on compact sets, up to subsequences, and $w_\infty$ is not constant.
\end{lemma}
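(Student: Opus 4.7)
The plan is to verify the three claims in order, all following by direct unpacking of the definitions and using the information distilled in Lemma \ref{lem:consDP}.

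First, the invasion $\Omega_n \supset B_R$ is immediate: $y \in \Omega_n$ iff $\bar x_n + r_n y \in \Omega$, and since $\bar x_n$ lies at distance $d_n$ from $\partial\Omega$, it suffices to have $r_n R < d_n$, i.e.\ $R < d_n/r_n$, which holds for $n$ large by Lemma \ref{lem:consDP}.

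Second, for the H\"older bound, pick $y,\bar y \in B_R$ and compute
\[
\frac{|w_n(y)-w_n(\bar y)|}{|y-\bar y|^\alpha} = \frac{r_n^\alpha}{|u_n(x_n)|}\cdot\frac{|u_n(\bar x_n+r_n y)-u_n(\bar x_n+r_n\bar y)|}{|r_n(y-\bar y)|^\alpha}.
\]
The two base points lie in $\Omega$ and their distance from $\partial\Omega$ is at least $d_n - r_n R = d_n(1-Rr_n/d_n)$, which by Lemma \ref{lem:consDP} is $\ge d_n/2$ for $n$ large. Hence by the very definition of $L_n$,
\[
\frac{|u_n(\bar x_n+r_n y)-u_n(\bar x_n+r_n\bar y)|}{|r_n(y-\bar y)|^\alpha} \le \frac{L_n}{(d_n/2)^{\alpha-\alpha_0}} = \frac{2^{\alpha-\alpha_0}L_n}{d_n^{\alpha-\alpha_0}}.
\]
Combining this with the upper bound $d_n^{\alpha-\alpha_0}|u_n(x_n)|/r_n^\alpha \ge L_n-1$ from \eqref{eq:seq2} yields
\[
[w_n]_{\alpha;B_R}\le \frac{2^{\alpha-\alpha_0}\,L_n}{L_n-1},
\]
which is $\le 2$ for $n$ large since $L_n\to\infty$.

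Third, note $w_n(0)=u_n(\bar x_n)/|u_n(x_n)| = 0$, so the uniform H\"older bound on $B_R$ gives a uniform sup-norm bound on $B_R$ as well. Arzel\`a--Ascoli together with a diagonal argument over an exhaustion $B_R\uparrow\R^N$ produces a subsequence converging locally uniformly to some $w_\infty\in C^{0,\alpha}_\loc(\R^N)$ with $w_\infty(0)=0$. To see $w_\infty$ is nonconstant, set $y_n := (x_n-\bar x_n)/r_n$, so $|y_n|=1$ and $w_n(y_n)=u_n(x_n)/|u_n(x_n)| = \pm 1$. Extracting a further subsequence we may assume $y_n\to y^\ast\in\partial B_1$; then local uniform convergence together with the H\"older equicontinuity give $w_n(y_n)\to w_\infty(y^\ast)$, so $|w_\infty(y^\ast)|=1 \ne 0 = w_\infty(0)$, proving $w_\infty$ is not constant.

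The only delicate point is the H\"older estimate, where one has to track the interaction between the weight $d_n^{\alpha-\alpha_0}$ built into $L_n$ and the corresponding factor $r_n^\alpha/|u_n(x_n)|$ produced by the rescaling; the choice of the normalization in the blow-up is precisely calibrated (via \eqref{eq:seq2}) so that these terms cancel up to the harmless factor $L_n/(L_n-1)\to 1$.
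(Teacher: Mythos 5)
Your proof is correct and follows essentially the same route as the paper: the invasion from $d_n/r_n\to\infty$, and the H\"older bound by combining the definition of $L_n$ at the rescaled points (whose distance to $\partial\Omega$ is at least $d_n-Rr_n$) with the lower bound in \eqref{eq:seq2}, the factor $2^{\alpha-\alpha_0}L_n/(L_n-1)\to 2^{\alpha-\alpha_0}<2$ playing the role of the paper's $\bigl(d_n/(d_n-Rr_n)\bigr)^{\alpha-\alpha_0}L_n/(L_n-1)\to 1$. You also usefully spell out the non-constancy of $w_\infty$ via the points $y_n=(x_n-\bar x_n)/r_n\in\partial B_1$ with $|w_n(y_n)|=1$, a step the paper leaves implicit.
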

\begin{proof}
By Lemma \ref{lem:consDP} we have
\[
\dist(0,\partial\Omega_n) = \max_{y\in\partial \Omega_n} |y| = \max_{x\in\partial \Omega} \frac{|x-\bar x_n|}{r_n} = \frac{d_n}{r_n}\ge R
\]
for $n$ large, and the first assertion follows.

Now let $y,y'\in B_R \subset \Omega_n$, and assume that, up to subsequences, 
$\dist(y',\partial\Omega_n)\le \dist(y,\partial\Omega_n)$. Then, writing 
$x=\bar x_n + r_n y$, $x'=\bar x_n + r_n y'$ we infer that $x'_n\in
B_{Rr_n}(\bar x_n)$, so that
\[
\dist(x,\partial\Omega)\ge \dist(x',\partial\Omega) \ge 
\dist(\bar x_n,\partial\Omega) - Rr_n=d_n - Rr_n.
\]
Thus
\[
\frac{| u_n(x)- u_n(x')|}{|x-x'|^\alpha} \le \frac{L_n}{\dist(x',\partial\Omega_n)^{\alpha-\adp}}\le \frac{L_n}{(d_n - Rr_n)^{\alpha-\adp}},
\]
whence
\[
\frac{|w_n(y)-w_n(y')|}{|y-y'|^\alpha} = \frac{| u_n(x)- u_n(x')|}{|x-x'|^\alpha}
\,\frac{r_n^\alpha}{|u_n(x_n)|}\le \frac{L_n}{(d_n - Rr_n)^{\alpha-\adp}} \,
\frac{d_n^{\alpha-\adp}}{L_n-1}
\]
by \eqref{eq:seq2}. Since $L_n\to+\infty$ and (by Lemma \ref{lem:consDP}) $r_n=o(d_n)$ as $n\to+\infty$, also the second assertion follows.
\end{proof}

By direct computation it is easy to check that $w_n$ solves
\begin{equation}\label{eq:bu_sqn}
-\tr\left(A_n(y) D^2w_n\right) + \frac{r_n^2}{|u_n(x_n)|} 
H_n\left(y, \frac{|u_n(x_n)|}{r_n}Dw_n\right) = g_n(y)\qquad \text{in }\Omega_n,
\end{equation}
where
\[
A_n(y) = A(\bar x_n + r_n y), \quad H_n(y,p) = H(\bar x_n + r_n y,p), \quad
g_n(y) =  \frac{r_n^2}{|u_n(x_n)|} f_n(\bar x_n + r_n y).
\]
In order to pass to the limit in \eqref{eq:bu_sqn} we are going to show 
uniform boundedness in $W^{2,q}_\loc$, by means of an iterative argument. The desired regularity is achieved via elliptic estimates and interpolation. The following argument is inspired by ideas in \cite{Amann_1978}.

\begin{proposition}\label{prop:holdertoW2q} Let $R > 0 $, $g \in L^q(B_{2R})$ and $v \in W^{2,q}(B_{2R})$ be such that $v(0) = 0$,
\[
\|g\|_{q, B_{2R}} + [v]_{\alpha, B_{2R}} \le c_1\qquad \left| \tr\left(A(x) D^2 v(x) \right)\right| \le c_2|D v(x)|^\gamma + g(x)
\]
a.e. in $B_{2R}$, for some $c_1, c_2 > 0$. Then, there exists $K$ depending on $c_1, c_2, R$ such that
\[
\|D^2 v\|_{q,B_{R}}\le K.
\]
\end{proposition}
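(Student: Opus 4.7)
The strategy is a bootstrap in the spirit of \cite{Amann_1978}: pair the linear Calder\'on--Zygmund estimate (Lemma \ref{lem:GT}) with the Gagliardo--Nirenberg interpolation (Lemma \ref{lem:GN}) to produce a self-improving inequality for $\Phi(\rho):=\|D^2v\|_{q;B_\rho}$ on concentric balls, and then close it via Young's inequality together with a classical iteration lemma. The key numerical input is that, under the standing assumption $q>N/\gamma'$, the exponent $a=1-q/N$ of Lemma \ref{lem:GN} satisfies $a\gamma<1$, which is precisely what makes the nonlinear term $|Dv|^\gamma$ subcritical with respect to $\|D^2v\|_q$ once fed back through interpolation.

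First I would notice that, since $v(0)=0$ and $[v]_{\alpha;B_{2R}}\le c_1$, one has $\|v\|_{\infty;B_{2R}}\le c_1(2R)^\alpha$, and hence $\|v\|_{q;B_\rho}$ is bounded uniformly for $\rho\in[R,2R]$. Viewing $v$ as a strong solution of the linear equation $-\tr(A D^2 v)=G$ where $|G|\le c_2|Dv|^\gamma+g$ (and noting $G\in L^q(B_{2R})$ since $Dv\in L^{q^*}(B_{2R})$ and $\gamma q\le q^*$), I apply Lemma \ref{lem:GT} to the pair $B_\rho\subset B_{\rho'}\subset B_{2R}$ with $\sigma=\rho/\rho'$, obtaining
\[
\Phi(\rho)\le \frac{C}{(\rho'-\rho)^2}\bigl(\rho'^2\|G\|_{q;B_{\rho'}}+\|v\|_{q;B_{\rho'}}\bigr)\le \frac{C'}{(\rho'-\rho)^2}\bigl(1+\||Dv|^\gamma\|_{q;B_{\rho'}}\bigr),
\]
with $C'=C'(c_1,c_2,R)$; if $2R$ exceeds the smallness threshold $\delta$ of Lemma \ref{lem:GT}, a standard covering of $B_\rho$ by finitely many smaller balls is absorbed into $C'$. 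Since $\gamma q\le q^*$, H\"older on the ball gives $\||Dv|^\gamma\|_{q;B_{\rho'}}\le C(R)\,\|Dv\|_{q^*;B_{\rho'}}^\gamma$, and Lemma \ref{lem:GN} with $[v]_{\alpha;B_{\rho'}}\le c_1$ yields $\|Dv\|_{q^*;B_{\rho'}}^\gamma\le C(c_1)\bigl(1+\Phi(\rho')^{a\gamma}\bigr)$. Combining,
\[
\Phi(\rho)\le \frac{C''}{(\rho'-\rho)^2}\bigl(1+\Phi(\rho')^{a\gamma}\bigr),\qquad R\le\rho<\rho'\le 2R.
\]

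Finally, because $a\gamma<1$, Young's inequality lets me split $\Phi(\rho')^{a\gamma}\le\epsilon\Phi(\rho')+C_\epsilon$ for arbitrary $\epsilon>0$; choosing $\epsilon$ so that $C''\epsilon/(\rho'-\rho)^2=1/2$ produces
\[
\Phi(\rho)\le \tfrac12\Phi(\rho')+\frac{K}{(\rho'-\rho)^{2/(1-a\gamma)}},\qquad R\le\rho<\rho'\le 2R.
\]
Since $\Phi$ is finite on $[R,2R]$ (as $v\in W^{2,q}(B_{2R})$), the classical iteration lemma of Giaquinta yields $\Phi(R)\le K'$ with $K'$ depending only on $c_1,c_2,R$ and the structural data. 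The step requiring genuine care is precisely this absorption: without the strict gap $a\gamma<1$---equivalently without $q>N/\gamma'$---the fed-back term $\Phi(\rho')^{a\gamma}$ would no longer be subcritical in $\Phi$ and no Young-type splitting could close the bootstrap, which is the structural reason the critical integrability $q=q_0$ has to be excluded.
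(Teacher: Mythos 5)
Your proof is correct and follows essentially the same route as the paper: bound $\|v\|_{q}$ through the H\"older seminorm and $v(0)=0$, combine the local Calder\'on--Zygmund estimate (Lemma \ref{lem:GT}) with the Gagliardo--Nirenberg inequality (Lemma \ref{lem:GN}) to control $\|D^2v\|_{q;B_\rho}$ by a subcritical power $a\gamma<1$ of $\|D^2v\|_{q;B_{\rho'}}$, and close by iterating on nested balls (with the same covering remark when $2R>\delta$). The only, immaterial, difference is the final absorption: you linearize the power $\Phi(\rho')^{a\gamma}$ via Young's inequality and invoke the classical Giaquinta-type iteration lemma, whereas the paper argues by a dichotomy and a logarithmic iteration along the dyadic radii $\rho_k=(2-2^{-k})R$; both exploit exactly the same structural fact $a\gamma<1$, i.e.\ $q>N/\gamma'$.
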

\begin{proof} First, note that since $[v]_{\alpha, B_{2R}} \le c_1$ and $v(0) = 0$,
\begin{equation}\label{eq:boundwn1}
\|v\|_{q;B_{2R}}\le c_1 C  R^{\alpha+\frac{N}{q}} = c_1 C R^2.
\end{equation}
Let $R \le \rho \le 2R$ and $0 < \sigma < 1$. Assume first that $2R \le \delta$, $\delta$ be as in Lemma \ref{lem:GT}. Applying such lemma, H\"older's inequality, and using \eqref{eq:boundwn1} we obtain
\begin{equation*}
\begin{split}
\|D^2 v\|_{q;B_{\sigma \rho}}&\le \frac{C}{(1-\sigma)^2R^2}\left(
R^2 \|  c_2|D v|^\gamma + g \|_{q;B_{\rho}} + \|  v \|_{q;B_{\rho}}
\right)\\
&\le \frac{C}{(1-\sigma)^2}\left(
R^{\gamma-\frac{N}{q}(\gamma-1)}\|D v\|_{q^*;B_\rho}^\gamma+1
\right).
\end{split}
\end{equation*}
Therefore, applying Lemma \ref{lem:GN} we have
\begin{equation}\label{eq:smallvsbig}
\|D^2 v\|_{q;B_{\sigma \rho}} \le  \frac{C}{(1-\sigma)^2}\left(R^{\gamma-\frac{N}{q}(\gamma-1)}
\|D^2 v\|^{a\gamma}_{q;B_\rho}+R^{\gamma-\frac{N}{q}(\gamma-1)}+1
\right).
\end{equation}
\smallskip

Now, if $\|D^2 v\|^{a\gamma}_{q,B_{R}}\le 1 + R^{-\gamma+\frac{N}{q}(\gamma-1)}$ then there is nothing 
to prove. Otherwise, \eqref{eq:smallvsbig} yields, for every $R\le \rho \le
2R$ and $0<\sigma<1$,
\begin{equation}\label{eq:iter_1}
\|D^2 v\|_{q,B_{\sigma \rho}}\le\frac{ E}{(1-\sigma)^2}
\|D^2 v\|_{q;B_\rho}^{a\gamma} \ ,
\end{equation}
where $E =CR^{\gamma-\frac{N}{q}(\gamma-1)}$, and $C$ is independent of $\rho,\sigma$. For
$k\in\N$ we write
\[
\rho_k=(2-2^{-k})R,\quad 1-\sigma_{k+1} = \frac{\rho_{k+1}-\rho_k}{\rho_{k+1}} \ge 2^{-(k+2)},\quad \phi_k = \log_2 \|D^2 v\|_{q;B_{\rho_k}} > 0.
\]
Then \eqref{eq:iter_1} implies, for every $k$,
\[
\phi_{k} \le \log_2 E + 2(k+2) + a\gamma\phi_{k+1}
\]
and, by induction,
\[
\phi_{0} \le (4 + \log_2 E)\sum_{i=0}^{k-1} (a\gamma)^i + 2\sum_{i=0}^{k-1} i (a\gamma)^i + (a\gamma)^{k}\phi_{k},
\]
for every $k$. Let $k\to+\infty$. Since $a\gamma<1$ and $\phi_k\le 
\log_2 \|D^2 v\|_{q;B_{2R}}<+\infty $ we obtain 
\[
\phi_{0} \le \frac{4 + \log_2 E}{1-a\gamma} + \frac{2}{(1-a\gamma)^2},
\]
and the conclusion follows by the definitions of $\phi_0$ and $E$.

The case $2R > \delta$ follows by a standard covering argument.
\end{proof}
We now apply the previous proposition to the blow-up sequence $w_n$, which solves  \eqref{eq:bu_sqn}.
\begin{lemma}\label{lem:iter}
For every $R\ge 1$ there exists a constant $C_R$ such that
\[
\|D^2 w_n\|_{q,B_{R}}\le C_R,
\]
for $n$ sufficiently large.
\end{lemma}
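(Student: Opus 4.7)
\medskip

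\noindent \textbf{Proof plan.} The plan is to apply Proposition \ref{prop:holdertoW2q} directly to the blow-up functions $w_n$ on the ball $B_{2R}$, once I have translated the rescaled equation \eqref{eq:bu_sqn} into the form $|\tr(A_n D^2 w_n)| \le c_2 |Dw_n|^\gamma + \hat g_n$ with uniformly bounded constants.

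First I would collect the ``free'' facts. By Lemma \ref{lem:invasion}, for $n$ large, $B_{2R} \subset \Omega_n$ and $[w_n]_{\alpha;B_{2R}} \le 2$; moreover $w_n(0)=u_n(\bar x_n)/|u_n(x_n)|=0$. Next I would verify that $g_n \to 0$ in $L^q(B_{2R})$: by a change of variables and $\alpha q = 2q - N$,
\[
\|g_n\|_{q;B_{2R}}^{q} = \left(\frac{r_n^{\alpha}}{|u_n(x_n)|}\right)^{q}\!\int_{B(\bar x_n,2Rr_n)}|f_n|^q\,dx,
\]
and from \eqref{eq:seq2} combined with $d_n\le\diam(\Omega)$ one has $r_n^\alpha/|u_n(x_n)| \le \diam(\Omega)^{\alpha-\adp}/(L_n-1) \to 0$, so $\|g_n\|_{q;B_{2R}} \to 0$.

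The core work is the pointwise bound on the rescaled Hamiltonian term in \eqref{eq:bu_sqn}. Using the splitting $H = h|p|^\gamma + H_0$ from \eqref{eq:assH}, the coefficient of $|Dw_n|^\gamma$ is exactly
\[
h(\bar x_n + r_n y)\left(\frac{|u_n(x_n)|}{r_n^{\adp}}\right)^{\gamma-1},
\]
which is bounded uniformly in $n$ by $\|h\|_\infty K^{\gamma-1}$ thanks to Lemma \ref{lem:consDP}. For the subleading piece, the analogous coefficient in front of $|Dw_n|^{\gamma_1}$ is $C_1\, r_n^{2-\gamma_1}\,|u_n(x_n)|^{\gamma_1-1}$, and the constant term is $C_2\, r_n^2/|u_n(x_n)|$. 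I would check that both tend to zero as $n\to\infty$ by bracketing $|u_n(x_n)|$: the upper bound $|u_n(x_n)|\le K r_n^{\adp}$ of Lemma \ref{lem:consDP} handles the case $\gamma_1\ge 1$ (the exponent of $r_n$ works out to $(\gamma-\gamma_1)/(\gamma-1)>0$), while the lower bound $|u_n(x_n)|\ge (L_n-1)r_n^\alpha/\diam(\Omega)^{\alpha-\adp}$ from \eqref{eq:seq2} handles the case $\gamma_1<1$ as well as the constant term (the factor $(L_n-1)^{-(1-\gamma_1)}$ vanishes and the power of $r_n$ is positive). Applying Young's inequality $|Dw_n|^{\gamma_1}\le |Dw_n|^\gamma + C$ absorbs the $\gamma_1$-term into the leading one, at the price of an additive constant; together with $g_n$ this forms $\hat g_n$, which is uniformly bounded in $L^q(B_{2R})$.

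With these ingredients, the hypotheses of Proposition \ref{prop:holdertoW2q} are met with constants $c_1$, $c_2$ independent of $n$ (note that the rescaled diffusions $A_n(y)=A(\bar x_n+r_n y)$ satisfy \eqref{eq:assA} with the same $\lambda,\Lambda$ and have a modulus of continuity that improves as $r_n\to 0$, so the parameter $\delta$ of Lemma \ref{lem:GT} is uniform). The conclusion $\|D^2 w_n\|_{q;B_R}\le C_R$ follows. The main technical obstacle is the bookkeeping in the previous paragraph: the subleading term $H_0$ must be shown to produce coefficients that are controlled for \emph{all} admissible $\gamma_1\in[0,\gamma)$, which requires using the upper and lower bounds on $|u_n(x_n)|$ in complementary ranges and verifying that the resulting exponents of $r_n$ are indeed positive; this is where the interplay between $\adp$, $\alpha$ and $\gamma$ is crucial.
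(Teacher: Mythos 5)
Your proposal is correct and follows essentially the same route as the paper: verify the hypotheses of Proposition \ref{prop:holdertoW2q} for $w_n$ on $B_{2R}$ using Lemma \ref{lem:invasion} for the H\"older bound, Lemma \ref{lem:consDP} (equivalently, \eqref{eq:seq2}) for the coefficients, and the decay of $g_n$. The only difference is cosmetic: the paper avoids your case analysis on $\gamma_1$ by bounding the whole Hamiltonian at once via $|H(x,p)|\le C_1|p|^\gamma+C_2$ (a consequence of \eqref{eq:assH} and Young's inequality), which makes the $H_0$ bookkeeping unnecessary at this stage.
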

%
%
\begin{proof}
Let $R\ge1$ be fixed. By Lemma \ref{lem:invasion}, if $n$ is sufficiently large 
then $B_{2R}\subset\Omega_n$, whence $w_n\in W^{2,q}(B_{2R})$. We want to apply Proposition \ref{prop:holdertoW2q}, as
\begin{equation*}
\left|\tr\left(A_n(y) D^2w_n\right)\right| \le \frac{r_n^2}{|u_n(x_n)|}  \left| 
H_n\left(y, \frac{|u_n(x_n)|}{r_n}Dw_n\right) \right| + |g_n(y)|\qquad \text{in }B_{2R}.
\end{equation*}
To this aim we notice that, on the one hand, by \eqref{eq:assH} we know that $|H(x,p)|\le C_1|p|^\gamma + C_2$, for some 
$C_1,C_2$. Then Lemma \ref{lem:consDP} yields
\begin{equation*}
\begin{split}
\left|\frac{r_n^2}{|u_n(x_n)|} H_n\left(y, \frac{|u_n(x_n)|}{r_n}Dw_n\right)
\right|&\le C_1\frac{|u_n(x_n)|^{\gamma-1}}{r_n^{\gamma-2}}|Dw_n|^\gamma+
C_2\frac{r_n^2}{|u_n(x_n)|}\\
&\le C K^{\gamma-1} \left[|Dw_n|^\gamma+o(1)\right].
\end{split}
\end{equation*}
On the other hand, by the contradiction assumption and again Lemma \ref{lem:consDP}, 
\begin{equation}\label{eq:boundgn}
 \|g_n\|_{q;\Omega_n} = \frac{r_n^{2-\frac{N}{q}}}{|u_n(x_n)|} 
 \|f_n\|_{q;\Omega}\le \frac{r_n^{\alpha}}{|u_n(x_n)|} M \to 0.
\end{equation}
Finally, by Lemma \ref{lem:invasion}, $[w_n]_{\alpha;B_{2R}}\le 2$, hence Proposition \ref{prop:holdertoW2q} applies and the assertion follows.
\end{proof}
\begin{proof}[End of the proof of Thm. \ref{thm:main_intro}]
Let $R\ge 1$ and $n$ be sufficiently large. By Lemmas \ref{lem:iter} and 
\ref{lem:invasion} we know that $(w_n)_n$ is uniformly bounded in 
$W^{2,q}(B_R)$ and, by a diagonal procedure, $w_n\to w_\infty$ 
weakly in $W^{2,q}_\loc(\R^N)$. 

We want to pass to the limit in 
\eqref{eq:bu_sqn}. We already know that $g_n\to0$ in $L^q$, see equation 
\eqref{eq:boundgn}. Moreover, up to subsequences, we can assume $\bar x_n 
\to\bar x_\infty\in\overline{\Omega}$, whence \eqref{eq:assA} yields
\begin{equation}\label{eq:convA}
A_n(y) D^2w_n \to A_\infty D^2 w_\infty\quad\text{weakly in }L^q_\loc \ ,
\qquad\text{where }A_\infty=A(\bar x_\infty).
\end{equation}

Turning to the hamiltonian term, and taking into account \eqref{eq:assH}, 
we first notice that, since $\gamma q < q^*$, 
\[
h(\bar x_n + r_n y) \frac{|u_n(x_n)|^{\gamma-1}}{r_n^{\gamma-2}}|Dw_n|^\gamma
\to h_\infty |Dw_\infty|^\gamma \text{ strongly in }L^q_\loc,
\quad\text{where }0\le h_\infty \le K^{\gamma-1} h(\bar x_\infty)
\]
(recall the definition of $K$ in Lemma \ref{lem:consDP}). On the other hand
\eqref{eq:assH} implies
\[
\begin{split}
\frac{r_n^2}{|u_n(x_n)|} 
H_0&\left(\bar x_n + r_n y, \frac{|u_n(x_n)|}{r_n}Dw_n\right) \le 
C_1 \frac{|u_n(x_n)|^{\gamma_1-1}}{r_n^{\gamma_1-2}}|Dw_n|^{\gamma_1} + 
C_2 \frac{r_n^2}{|u_n(x_n)|}\\
& \le C_1\frac{r_n^{\gamma-\gamma_1}}{|u_n(x_n)|^{\gamma-\gamma_1}} K^{\gamma_1-1}|Dw_n|^{\gamma_1} + 
C_2 \frac{r_n^2}{|u_n(x_n)|}\to 0 
\end{split}
\]
in $L^q_\loc$, since $\gamma_1<\gamma$ (notice that, in \eqref{eq:assH}, 
we can assume w.l.o.g.\ $\gamma_1>1$). Summing up we obtain that
\begin{equation}\label{eq:convH}
 \frac{r_n^2}{|u_n(x_n)|} 
H_n\left(y, \frac{|u_n(x_n)|}{r_n}Dw_n\right) \to 
h_\infty |Dw_\infty|^\gamma\qquad \text{ strongly in }L^q_\loc.
\end{equation}
Then using \eqref{eq:bu_sqn} we obtain that the convergence in \eqref{eq:convA} is 
actually strong, and finally $w_\infty\in W^{2,q}_\loc(\R^N)$ solves
\[
-\tr \left(A_\infty D^2 w\right) + h_\infty|Dw|^\gamma = 0\qquad \text{in }\R^N.
\]
Now, in case $h_\infty>0$, Lemma \ref{lem:Lions} implies that $w$ is constant, in 
contradiction with Lemma \ref{lem:invasion}. On the other hand, in case 
$h_\infty=0$, $w_\infty$ is a harmonic function, globally H\"older continuous 
with exponent $\alpha<1$, which again implies the contradiction $w_\infty = \text{constant}$.
\end{proof}

We now prove Theorem \ref{thm:main_intro2}, which will be obtained as a straightforward consequence of Theorem \ref{thm:main_intro}.

\begin{proof}[Proof of Thm. \ref{thm:main_intro2}] Assume first that $q < N$, and fix any $\Omega''$ such that $\Omega' \ssubset \Omega'' \ssubset \Omega$. Then, Theorem \ref{thm:main_intro} yields the bound
\[
[u]_{\alpha, \Omega''} \le C, \qquad \alpha = 2 - \frac N q.
\]
The compact set $\overline{\Omega'}$ can be covered by a finite number of balls $B^k = B_r(x_k)$, such that $B_{2r}^k \subset \Omega''$. On any such ball, Proposition \ref{prop:holdertoW2q} applies (to $u - u(x_k)$), so that
\[
\|D^2 u\|_{q,B^k}\le K,
\]
and the conclusion follows. If $q \ge N$, one needs a few additional bootstrap steps. Pick any $N \frac \gamma{\gamma+1} < q' < N$, and bounds on $\|u\|_{W^{2,q'},\Omega''}$ are obtained as in the previous step. By Sobolev embeddings, these yield bounds on $\|u\|_{W^{1,(q')^*},\Omega''}$, and therefore $H(x, \nabla u)$ is bounded in $L^p(\Omega'')$, with $p = \frac{(q')^*}{\gamma}$, which is strictly bigger than $N$. One now applies Calder\'on-Zygmund regularity (see Lemma \ref{lem:GT}) to control $u$ in $W^{2,p}$, possibly on a smaller set. Since $p > N$, $u$ enjoys Lipschitz bounds, and again by Calder\'on-Zygmund one achieves the desired $W^{2,q}$ bounds.
\end{proof}


%

\medskip
\begin{flushright}
\noindent Marco Cirant\\
Dipartimento di Matematica ``Tullio Levi-Civita'', Universit\`a di Padova\\
Via Trieste 63, 35121 Padova, Italy\\
\verb"cirant@math.unipd.it"\\
\medskip
\noindent Gianmaria Verzini\\
Dipartimento di Matematica, Politecnico di Milano\\
piazza Leonardo da Vinci 32, 20133 Milano, Italy\\
\verb"gianmaria.verzini@polimi.it"
\end{flushright}


\begin{thebibliography}{10}

\bibitem{Amann_1978}
H.~Amann and M.~G. Crandall.
\newblock On some existence theorems for semi-linear elliptic equations.
\newblock {\em Indiana Univ. Math. J.}, 27(5):779--790, 1978.

\bibitem{Bardi_1991}
M.~{Bardi} and B.~{Perthame}.
\newblock {Uniform estimates for some degenerating quasilinear elliptic
  equations and a bound on the Harnack constant for linear equations}.
\newblock {\em {Asymptotic Anal.}}, 4(1):1--16, 1991.

\bibitem{Bensoussan_2002}
A.~Bensoussan and J.~Frehse.
\newblock {\em Regularity results for nonlinear elliptic systems and
  applications}, volume 151 of {\em Applied Mathematical Sciences}.
\newblock Springer-Verlag, Berlin, 2002.

\bibitem{Bidaut-Veron_2014}
M.-F. Bidaut-V\'{e}ron, M.~Garcia-Huidobro, and L.~V\'{e}ron.
\newblock Local and global properties of solutions of quasilinear
  {H}amilton-{J}acobi equations.
\newblock {\em J. Funct. Anal.}, 267(9):3294--3331, 2014.

\bibitem{Capuzzo_2010}
I.~Capuzzo~Dolcetta, F.~Leoni, and A.~Porretta.
\newblock H\"{o}lder estimates for degenerate elliptic equations with coercive
  {H}amiltonians.
\newblock {\em Trans. Amer. Math. Soc.}, 362(9):4511--4536, 2010.

\bibitem{Cianchi_2014}
A.~Cianchi and V.~Maz'ya.
\newblock Gradient regularity via rearrangements for {$p$}-{L}aplacian type
  elliptic boundary value problems.
\newblock {\em J. Eur. Math. Soc. (JEMS)}, 16(3):571--595, 2014.

\bibitem{CG_2021}
M.~Cirant and A.~Goffi.
\newblock Maximal {$L^q$}-regularity for parabolic {H}amilton-{J}acobi
  equations and applications to mean field games.
\newblock {\em Ann. PDE}, 7(2):Paper No. 19, 40, 2021.

\bibitem{Cirant_2021}
M.~Cirant and A.~Goffi.
\newblock On the problem of maximal ${L}^q$-regularity for viscous
  hamilton{\textendash}jacobi equations.
\newblock {\em Archive for Rational Mechanics and Analysis}, 240(3):1521--1534,
  mar 2021.

\bibitem{Dall_Aglio_2015}
A.~Dall'Aglio and A.~Porretta.
\newblock Local and global regularity of weak solutions of elliptic equations
  with superquadratic hamiltonian.
\newblock {\em Transactions of the American Mathematical Society},
  367(5):3017--3039, 2015.

\bibitem{Evans_1985}
L.~Evans and H.~Ishii.
\newblock A {PDE} approach to some asymptotic problems concerning random
  differential equations with small noise intensities.
\newblock {\em Annales de l'Institut Henri Poincar{\'{e}} C, Analyse non
  lin{\'{e}}aire}, 2(1):1--20, 1985.

\bibitem{Gilbarg_2001}
D.~Gilbarg and N.~S. Trudinger.
\newblock {\em Elliptic Partial Differential Equations of Second Order}.
\newblock Springer Berlin Heidelberg, 2001.

\bibitem{Goffi_arxiv}
A.~Goffi.
\newblock On the optimal ${L}^q$-regularity for viscous {H}amilton-{J}acobi
  equations with sub-quadratic growth in the gradient, arXiv:2112.02676, 2021.

\bibitem{LU}
O.~A. Ladyzhenskaya and N.~N. Ural'tseva.
\newblock {\em Linear and quasilinear elliptic equations}.
\newblock Academic Press, New York-London, 1968.

\bibitem{Leonori_2016}
T.~Leonori and A.~Porretta.
\newblock Large solutions and gradient bounds for quasilinear elliptic
  equations.
\newblock {\em Comm. Partial Differential Equations}, 41(6):952--998, 2016.

\bibitem{Lions_1985}
P.~L. Lions.
\newblock Quelques remarques sur les problemes elliptiques quasilineaires du
  second ordre.
\newblock {\em Journal d'Analyse Math{\'{e}}matique}, 45(1):234--254, 1985.

\bibitem{Meier}
M.~Meier.
\newblock Liouville theorems, partial regularity and {H}\"{o}lder continuity of
  weak solutions to quasilinear elliptic systems.
\newblock {\em Trans. Amer. Math. Soc.}, 284(1):371--387, 1984.

\bibitem{Palatucci_2020}
G.~Mingione and G.~Palatucci.
\newblock Developments and perspectives in nonlinear potential theory.
\newblock {\em Nonlinear Anal.}, 194:111452, 17, 2020.

\bibitem{Phuc_2020}
Q.-H. Nguyen and N.~C. Phuc.
\newblock Quasilinear {R}iccati-type equations with oscillatory and singular
  data.
\newblock {\em Adv. Nonlinear Stud.}, 20(2):373--384, 2020.

\bibitem{Nirenberg_1966}
L.~Nirenberg.
\newblock An extended interpolation inequality.
\newblock {\em Ann. Scuola Norm. Sup. Pisa Cl. Sci. (3)}, 20:733--737, 1966.

\bibitem{Peletier_Serrin}
L.~A. Peletier and J.~Serrin.
\newblock Gradient bounds and {L}iouville theorems for quasilinear elliptic
  equations.
\newblock {\em Ann. Scuola Norm. Sup. Pisa Cl. Sci. (4)}, 5(1):65--104, 1978.

\end{thebibliography}
\end{document}